\newcounter{nodecount}
\declaretheorem[style=plain,name=Theorem]{theorem}
\declaretheorem[style=plain,sibling=theorem,name=Lemma]{lemma}
\declaretheorem[style=plain,sibling=theorem,name=Claim]{claim}
\declaretheorem[style=plain,sibling=theorem,name=Observation]{observation}
\crefname{theorem}{Theorem}{Theorems}
\crefname{proposition}{Proposition}{Propositions}
\crefname{lemma}{Lemma}{Lemmas}
\crefname{exmp}{Example}{Examples}
\crefname{corollary}{Corollary}{Corollarys}
\crefname{claim}{Claim}{Claims}
\crefname{remark}{Remark}{Remarks}
\crefname{section}{Section}{Sections}
\newcommand{\OPT}{\operatorname{\textup{OPT}}}
\newcommand{\ourprob}{Basis Sequence Reconfiguration}
\newcommand{\prb}[1]{\textnormal{\scshape #1}}
\newcommand{\symdif}{\mathbin{\triangle}}
\newcommand{\rseq}[1]{\langle#1\rangle}
\newcommand{\tail}[1]{\operatorname{\mathrm{tail}}(#1)}
\newcommand{\head}[1]{\operatorname{\mathrm{head}}(#1)}
\newcommand{\elmmat}[2]{M_{#1}^{#2}}
\newcommand{\elmver}[2]{e_{#1}^{#2}}
\newcommand{\elmgrd}[2]{E_{#1}^{#2}}
\newcommand{\cntver}[2]{c_{#1}^{#2}}
\newcommand{\setmat}[2]{M_{#1}^{#2}}
\newcommand{\setver}[2]{s_{#1}^{#2}}
\newcommand{\setgrd}[2]{E_{#1}^{#2}}
\newcommand{\floor}[1]{\left\lfloor {#1} \right\rfloor }
\newcommand{\source}[1]{{#1}^\mathtt{s}}
\newcommand{\sink}[1]{{#1}^\mathtt{t}}
\newcommand{\elemid}[2]{f({#1},{#2})}
\newcommand{\Llength}{n^2}
\newcommand{\rev}[1]{#1}
\newcommand{\problemtitle}[1]{\gdef\@problemtitle{#1}}
\newcommand{\probleminput}[1]{\gdef\@probleminput{#1}}
\newcommand{\problemoutput}[1]{\gdef\@problemoutput{#1}}
  \par\addvspace{.5\baselineskip}
  \par\addvspace{.5\baselineskip}
\date{}
\begin{document}

\title{Basis sequence reconfiguration in the union of matroids}
\author{Tesshu Hanaka\thanks{Faculty of Information Science and Electrical Engineering, Kyushu University, Japan.
      Email: \texttt{hanaka@inf.kyushu-u.ac.jp}}
\and Yuni Iwamasa\thanks{Graduate School of Informatics, Kyoto University, Japan.
    Email: \texttt{iwamasa@i.kyoto-u.ac.jp}}
\and Yasuaki Kobayashi\thanks{Faculty of Information Science and Technology, Hokkaido University, Japan.
    Email: \texttt{koba@ist.hokudai.ac.jp}}
\and Yuto Okada\thanks{Graduate School of Informatics, Nagoya University, Japan.
    Email: \texttt{okada.yuto.b3@s.mail.nagoya-u.ac.jp}}
\and Rin Saito\thanks{Graduate School of Information Sciences, Tohoku University, Japan.
        Email: \texttt{rin.saito@dc.tohoku.ac.jp}}}

\maketitle

\begin{abstract}
    Given a graph $G$ and two spanning trees $T$ and $T'$ in $G$, {\scshape Spanning Tree Reconfiguration} asks whether there is a step-by-step transformation from $T$ to $T'$ such that all intermediates are also spanning trees of $G$, by exchanging an edge in $T$ with an edge outside $T$ at a single step.
    This problem is naturally related to matroid theory, which shows that there always exists such a transformation for any pair of $T$ and $T'$.
    Motivated by this example, we study the problem of transforming a sequence of spanning trees into another sequence of spanning trees.
    We formulate this problem in the language of matroid theory: Given two sequences of bases of matroids, the goal is to decide whether there is a transformation between these sequences.
    We design a polynomial-time algorithm for this problem, even if the matroids are given as basis oracles.
    To complement this algorithmic result, we show that the problem of finding a shortest transformation is NP-hard to approximate within a factor of $c \log n$ for some constant $c > 0$, where $n$ is the total size of the ground sets of the input matroids.
    \begin{description}
    \item[Keywords] Combinatorial reconfiguration, Matroids, Polynomial-time algorithm, Inapproximability
    \end{description}
\end{abstract}

\section{Introduction}

In \emph{reconfiguration problems} (see~\cite{Heuvel13:survey,Nishimura18:survey} for introductory material), given two (feasible) configurations in a certain system, the objective is to determine whether there exists a step-by-step transformation between these configurations such that all intermediate configurations are also feasible.
Among numerous reconfiguration problems studied in the literature, one of the first problems explicitly recognized as a reconfiguration problem is \prb{Spanning Tree Reconfiguration}.
In this problem, given two spanning trees $T, T'$ in a (multi)graph $G$, one is asked to find a transformation from one spanning tree $T$ into the other spanning tree $T'$ by repeatedly exchanging a single edge (i.e., $T - e + f$ for an edge $e \in E(T)$ and $f \in E(G) \setminus E(T)$), such that all intermediates are also spanning trees in $G$.
Ito et al.~\cite{ItoDHPSUU11:TCS:complexity}\footnote{More specifically, they considered a weighted version of this problem.} observed that one can always find such a transformation with exactly $|E(T) \setminus E(T')|$ exchanges by exploiting a well-known property of \emph{matroids}.

Let $E$ be a finite set and let $\mathcal B \subseteq 2^E$ be a nonempty collection of subsets of $E$ that satisfies the following \emph{basis exchange axiom}: for distinct $B, B' \in \mathcal B$ and $x \in B \setminus B'$, there is $y \in B' \setminus B$ satisfying $B - x + y \in \mathcal B$.
Then, the pair $M = (E, \mathcal B)$ is called a \emph{matroid}, and each set in $\mathcal B$ is called a \emph{basis} of $M$.
For a connected graph $G$ with edge set $E(G)$, let $\mathcal T$ be the collection of all edge subsets, each of which induces a spanning tree in $G$.
Then it is well known that $\mathcal{T}$ satisfies the basis exchange axiom: for each $e \in E(T) \setminus E(T')$, there is an edge $f \in E(T') \setminus E(T)$ such that $T - e + f$ is a spanning tree of $G$.
Hence, the pair $(E(G), \mathcal T)$ is a matroid, called a \emph{graphic matroid}. 
This also allows us to find a transformation of $|E(T) \setminus E(T')|$ exchanges for \prb{Spanning Tree Reconfiguration}.
Since every transformation between $T$ and $T'$ requires at least $|E(T) \setminus E(T')|$ exchanges, this is a shortest one among all transformations.

In this paper, we address a natural extension of \prb{Spanning Tree Reconfiguration}.
Let $G$ be a (multi)graph.
We say that a sequence of $k$ spanning trees $(T_1, \dots, T_k)$ of $G$ is \emph{feasible} if the spanning trees are edge-disjoint.
A pair of two feasible sequences of spanning trees $\mathbb T  = (T_1, \dots, T_k)$ and $\mathbb T' = (T'_1, \dots, T'_k)$ is said to be \emph{adjacent} if there is an index $1 \le i \le k$ such that $T_j = T'_j$ for $1 \le j \le k$ with $i \neq j$ and $T'_i = T_i - e + f$ for some $e \in E(T_i)$ and $f \in E(G) \setminus E(T_i)$.
Given two feasible sequences of $k$ spanning trees $\mathbb T = (T_1, \dots, T_k)$ and $\mathbb T' = (T'_1, \dots, T'_k)$ of a graph $G = (V, E)$, \prb{Spanning Tree Sequence Reconfiguration} asks whether there are feasible sequences $\mathbb T_0, \dots, \mathbb T_\ell$ such that $\mathbb T_0 = \mathbb T$, $\mathbb T_\ell = \mathbb T'$, and $\mathbb T_{i - 1}$ and $\mathbb T_i$ are adjacent for all $1 \le i \le \ell$.
This type of problem naturally extends conventional reconfiguration problems by enabling a ``simultaneous transformation'' of multiple mutually exclusive solutions.

To address \prb{Spanning Tree Sequence Reconfiguration}, we consider a more general problem, called \textsc{\ourprob}.
Let $\mathbb M = (M_1, \dots, M_k)$ be a sequence of matroids, where $M_i = (E_i, \mathcal B_i)$ for $1 \le i \le k$.
Let us note that $E_i$ and $E_j$ may not be disjoint for distinct $i$ and $j$.
A \emph{basis sequence} of $\mathbb M$ is a sequence $\mathbb B = (B_1, \dots, B_k)$ such that $B_i$ is a basis of $M_i$ (i.e., $B_i \in \mathcal B_i$).
A basis sequence $\mathbb B = (B_1, \dots, B_k)$ is said to be \emph{feasible} for $\mathbb M$ if $B_i \cap B_j = \emptyset$ for $1 \le i < j \le k$.
A pair of feasible basis sequences $\mathbb B = (B_1, \dots, B_k)$ and $\mathbb B' = (B'_1, \dots, B'_k)$ is said to be \emph{adjacent} if there is an index $1 \le i \le k$ such that $B_j = B'_j$ for $1 \le j \le k$ with $i \neq j$ and $B'_i = B_i - x + y$ for some $x \in B_i$ and $y \in E_i \setminus B_i$.
A feasible basis sequence $\mathbb B$ is \emph{reconfigurable to} a feasible basis sequence $\mathbb B'$ if there are feasible basis sequences $\mathbb B_0, \dots, \mathbb B_\ell$ of $\mathbb M$ such that $\mathbb B_0 = \mathbb B$, $\mathbb B_\ell = \mathbb B'$, and $\mathbb B_{i-1}$ and $\mathbb B_{i}$ are adjacent for all $1 \le i \le \ell$.
We refer to such a sequence $\langle \mathbb B_0, \dots, \mathbb B_\ell \rangle$ as a \emph{reconfiguration sequence} between $\mathbb B$ and $\mathbb B'$.
Our problem is formally defined as follows.
\begin{problem}
\problemtitle{\ourprob}
\probleminput{A tuple $\mathbb M = (M_1, \dots, M_k)$ of $k$ matroids and feasible basis sequences $\mathbb{B} = (B_1, \ldots, B_k)$ and $\mathbb{B}' = (B_1', \ldots, B_k')$.
}
\problemoutput{Determine if $\mathbb{B}$ is reconfigurable to $\mathbb{B}'$.}
\end{problem}

Note that if $M_i = (E(G),\mathcal{T})$  for every $i$, $\mathbb{B}=\mathbb{T}$, and $\mathbb{B}'=\mathbb{T}'$, \textsc{\ourprob} is equivalent to \prb{Spanning Tree Sequence Reconfiguration}. 

We also consider an optimization variant of \prb{\ourprob}: Given an instance of \prb{\ourprob}, the goal is to find a shortest reconfiguration sequence between $\mathbb B$ and $\mathbb B'$.
We refer to this problem as \prb{Shortest \ourprob}.

We investigate the computational complexity of \textsc{\ourprob}. 
In this paper, matroids are sometimes given as \emph{basis oracles}, that is, given a set $X \subseteq E$ of a matroid $M = (E, \mathcal B)$, the basis oracle (of $M$) returns true if and only if $X \in \mathcal B$.
In such a case, we can access $\mathcal B$ through this oracle and assume that the basis oracle can be evaluated in polynomial in $|E|$. 
Our main contribution is as follows.

\begin{theorem} \label{thm:solvability}
    \prb{\ourprob} can be solved in polynomial time, assuming that the input matroids are given as basis oracles.
    Moreover, if the answer is affirmative, we can compute a reconfiguration sequence between given two feasible basis sequences in polynomial time as well.
\end{theorem}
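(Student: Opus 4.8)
My plan is to route everything through the \emph{matroid union} $N \coloneqq M_1 \vee \dots \vee M_k$, taken on the common ground set $U \coloneqq E_1 \cup \dots \cup E_k$ (extending each $M_i$ to $U$ by declaring the missing elements loops). A short counting argument shows that a set $S \subseteq U$ with $|S| = \sum_i r(M_i)$ is independent in $N$ exactly when $S$ is the disjoint union $B_1 \cup \dots \cup B_k$ of a feasible basis sequence; hence the unions of feasible basis sequences are precisely the bases of $N$, and, since the input guarantees a feasible basis sequence, $r(N) = \sum_i r(M_i)$. This already yields a necessary condition: one exchange step in block $i$ replaces $B_i$ by $B_i - x + y$ and thereby replaces the union $S$ by $S - x + y$ (using $x \notin B_j$ for $j\ne i$ and $y \notin S$, which hold by feasibility), so no step can ever remove a \emph{coloop} of $N$ from its block; therefore $\mathbb B$ and $\mathbb B'$ must assign every coloop of $N$ to the same matroid.

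The core of the proof is the converse. \textbf{Main Lemma:} if $N$ has no coloops, then any two feasible basis sequences of $\mathbb M$ are reconfigurable to each other. Granting this, the general case follows by contracting the coloop set $L$ of $N$: setting $T_i \coloneqq B_i \cap L$ (which equals $B_i' \cap L$ by the necessary condition), the standard matroid-union identity $N/L = \bigvee_i (M_i/T_i)$ produces a coloop-free union, and the ``free parts'' $(B_i \setminus T_i)_i$ and $(B_i' \setminus T_i)_i$ are feasible basis sequences of $(M_i/T_i)_i$; any reconfiguration between them lifts, by re-attaching the fixed sets $T_i$ at every step, to a reconfiguration between $\mathbb B$ and $\mathbb B'$ in which no coloop ever moves. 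Thus $\mathbb B$ is reconfigurable to $\mathbb B'$ if and only if they agree on the block of every coloop of $N$.

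To prove the Main Lemma I would argue in two layers. (i) \emph{Changing the union:} the basis-exchange graph of $N$ is connected, so it suffices to emulate a single exchange $S \mapsto S - x + y$ between bases of $N$ by a bounded number of exchange steps on a feasible basis sequence with union $S$; since $S-x+y$ is independent in $N$, $x$ lies on the circuit of $N$ inside $S+y$, and the matroid-union exchange machinery then gives a directed $x \to \dots \to y$ path in the exchange graph $D_{\mathbb B}$ of the current sequence, whose exchanges, performed from the $y$-end backwards, realize the union change in $O(|U|)$ steps. Iterating, any feasible basis sequence can be driven to have any prescribed union $S^\star$. (ii) \emph{Fixing the union:} it remains to reconfigure between two partitions of the \emph{same} union $S^\star$ into bases of the $M_i$; here coloop-freeness is used via the fact that $N$ then has strictly more than $\sum_i r(M_i)$ non-loop elements, so $S^\star$ always leaves a usable ``spare'' outside it, and one shows that any two such partitions are connected by a sequence of exchange moves internal to $S^\star$, each of which is emulated by a few single-block exchanges that temporarily park an element on a spare (routing a spare into the required fundamental cocircuit first, by a short augmenting sequence, if it is not already there). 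Composing (i) from $\mathbb B$, (ii) at $S^\star$, and (i) in reverse to $\mathbb B'$ then gives a reconfiguration sequence of polynomial length.

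For the algorithm, the basis oracles test basis membership and single exchanges directly and, together with the cardinalities $r(M_i) = |B_i|$ read off the input, support the matroid-union/partition computations needed to obtain $r(N)$ and the coloop set $L$ in polynomial time; checking coloop-agreement is then immediate, and in the affirmative case the constructive Main Lemma outputs a reconfiguration sequence in polynomial time. I expect the Main Lemma — concretely, part (ii) — to be the real obstacle: turning ``there is a spare somewhere'' into an actual construction for arbitrary (non-uniform) matroids, i.e.\ showing one can always route a spare into the fundamental cocircuit that a prescribed exchange demands, and bounding the total number of steps. By contrast the necessity direction and the oracle bookkeeping should be routine.
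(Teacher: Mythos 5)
Your necessary condition is exactly the paper's: a coloop of the union $\bigvee_i M_i$ can never be exchanged out of its block, so $\mathbb B$ and $\mathbb B'$ must agree on which $B_i$ contains each coloop (the paper's Theorem~\ref{thm:reconfigure}, with Lemma~\ref{lem:coloop} giving the algorithmic characterization via reachability in the per-block exchangeability graph). Your instinct to work in the union of the per-matroid exchangeability graphs is also the right one. But there are two genuine gaps in the sufficiency direction.

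First, in step~(i) you assert that, given a directed $x\to\dots\to y$ path in the exchange graph, performing the exchanges ``from the $y$-end backwards'' realizes the union change with feasible intermediates. That is not automatic: after the first few backward exchanges, the current basis sequence is $\mathbb B \symdif W'$ for a suffix $W'$ of the path, and you need each $B_i \symdif (W'\cap A_i)$ to still be a basis of $M_i$. The paper proves exactly this, but it requires the walk to be \emph{shortcut-free} so that $W'\cap A_i$ is a unique matching in $D(M_i,B_i)$, after which Lemma~\ref{lem:unique-matching} applies (this is the content of Lemma~\ref{lem:feasibility}). You would need to argue that your path can be taken shortcut-free (a shortest path works) and invoke the unique-matching lemma; without that, the intermediate feasibility claim is unsupported.

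Second, and more seriously, you explicitly leave step~(ii) open, and that is precisely where the paper's real work lives. The paper never decomposes the problem into ``change the union, then permute within a fixed union.'' Instead it runs an induction on $d(\mathbb B,\mathbb B')$, and at each step extracts from the exchangeability graph a \emph{tadpole-walk}: either a shortcut-free directed path ending at a free element (your Type~(i) move, which changes the union), or a directed cycle extended by a shortcut-free tail to a free element (the Type~II case). The cycle changes the partition while leaving the union fixed, and the tail is exactly your ``routed spare,'' constructed concretely: the coloop-agreement hypothesis together with Lemma~\ref{lem:coloop} guarantees a directed path from the cycle to some free element, and the tadpole's tail is then walked back and forth so that every intermediate is of the form $\mathbb B \symdif W'$ with $W'$ a valid subgraph of the shortcut-free walk, hence feasible by Lemma~\ref{lem:feasibility}. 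Your acknowledgment that ``turning `there is a spare somewhere' into an actual construction \dots is the real obstacle'' is accurate; the tadpole-walk plus shortcut-freeness is that construction, and without something equivalent your Main Lemma is unproved. (The coloop-contraction reduction is fine but unnecessary; the paper simply carries the coloop-agreement invariant through the induction.)
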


This result nontrivially generalizes the previous result of \cite{ItoDHPSUU11:TCS:complexity}.
It would be worth mentioning that, in contrast to \prb{Spanning Tree Reconfiguration}, our problem \prb{Spanning Tree Sequence Reconfiguration} has infinitely many no-instances (see~\Cref{fig:no-instance} for an example).
\begin{figure}
    \centering
    \includegraphics[width=0.7\textwidth]{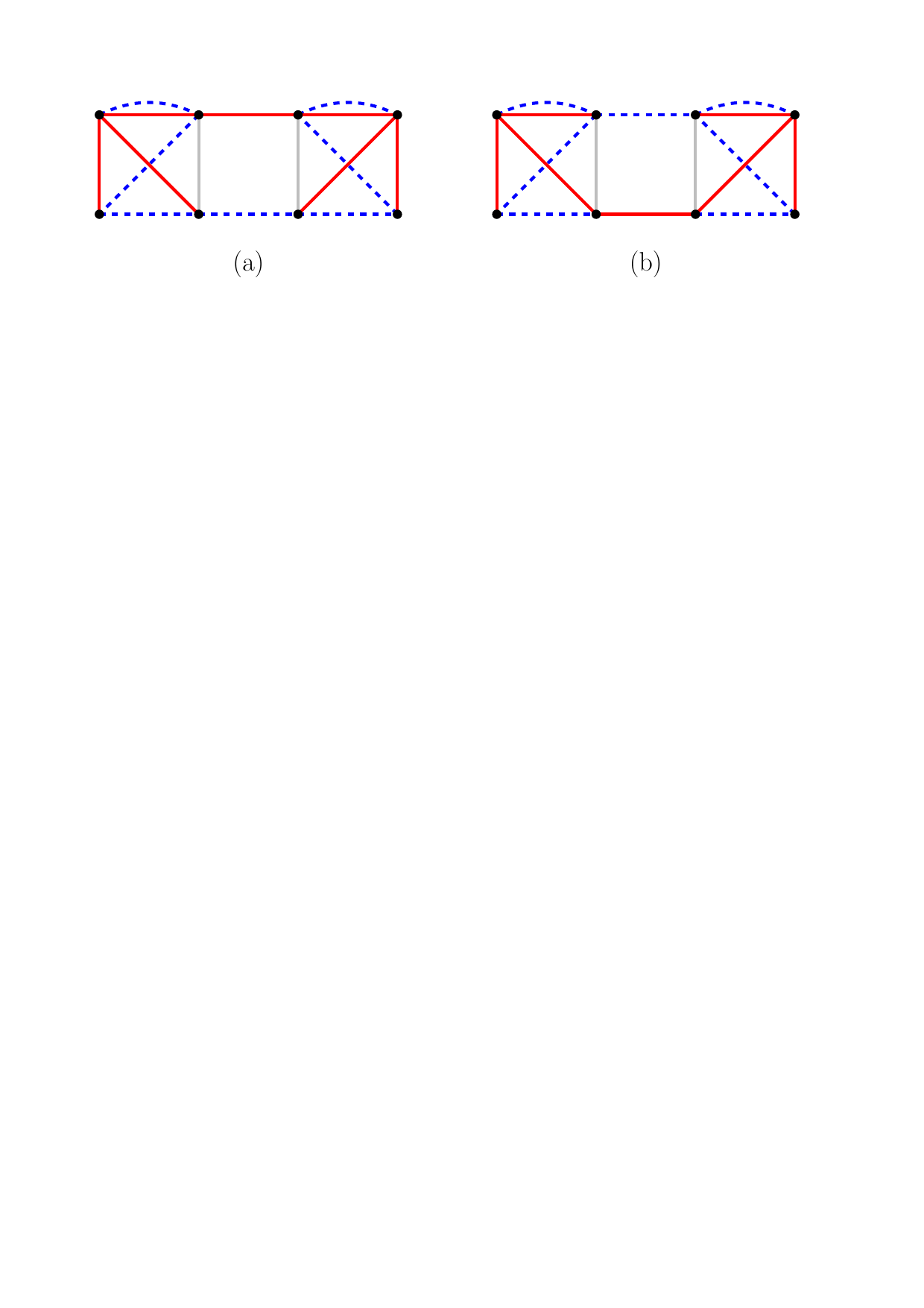}
    \caption{The figure illustrates an instance in which a pair of edge-disjoint spanning trees~(a) cannot be transformed into the other pair~(b), where the spanning trees are indicated by dashed blue lines and solid red lines.}
    \label{fig:no-instance}
\end{figure}

A natural extension of \prb{\ourprob} is to find a \emph{shortest} reconfiguration sequence.
Unfortunately, we show that it is hard to find it in polynomial time, even for approximately shortest reconfiguration sequences. 

\begin{theorem} \label{thm:hardness}
    \prb{Shortest \ourprob} is $\NP$-hard even if the input sequence $\mathbb M$ consists of two partition matroids.
    Furthermore, unless $\P = \NP$, \prb{Shortest \ourprob} cannot be approximated in polynomial time within a factor of $c\log n$ for some constant $c > 0$, where $n$ is the total size of the ground sets of the input matroids.
\end{theorem}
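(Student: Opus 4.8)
The plan is to reduce from \prb{Minimum Set Cover}, which under $\P\neq\NP$ admits no polynomial-time $c_0\log N$-approximation on a universe of size $N$, using the standard hard instances in which every set has a common size $d$; write $\tau$ for the optimum and recall that $\tau\ge N/d$, a bound used below to discard lower-order terms. Given such an instance $(U,\mathcal S)$ with $U=\{1,\dots,N\}$ and $\mathcal S=\{S_1,\dots,S_m\}$, I would build two partition matroids $M_1,M_2$ on a common (deliberately overlapping) ground set of size $\mathrm{poly}(N,m)$, together with feasible basis sequences $\mathbb B=(B_1,B_2)$ and $\mathbb B'=(B_1',B_2')$, arranged so that a shortest reconfiguration sequence between $\mathbb B$ and $\mathbb B'$ has length $2d\tau+\Theta(N)$. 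As $\tau\ge N/d$, this value lies in $[\,2d\tau,\,O(d\tau)\,]$; hence a polynomial $\rho$-approximation for \prb{Shortest \ourprob} would yield a polynomial $O(\rho)$-approximation for $\tau$. Since the ground-set size $n$ is $\mathrm{poly}(N)$, so that $\log n=\Theta(\log N)$, the claimed $c\log n$-inapproximability follows for a suitable $c>0$, and plain $\NP$-hardness for two partition matroids is the exact-optimum special case of the same reduction.

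The reduction hinges on two interacting gadget families that communicate only through the disjointness requirement $B_1\cap B_2=\emptyset$. For each $S_j$, $M_1$ carries a \emph{selection gadget} with two designated states, \emph{dormant} and \emph{active}: in the dormant state a family of ``edge elements'' $e_{j,i}$ ($i\in S_j$) all lie in $B_1$ and are hence barred from $B_2$, while in the active state they all leave $B_1$ and become available to $B_2$; switching states costs $\Theta(d)$ exchanges. For each $i\in U$, $M_2$ carries a \emph{coverage test}: a short chain of blocks along which $B_2$ must push a token from its position in $\mathbb B$ to its position in $\mathbb B'$, and the decisive hop of this token consists in grabbing some $e_{j,i}$ with $i\in S_j$ — which is possible precisely when $S_j$'s selection gadget is active. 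The endpoints $\mathbb B,\mathbb B'$ are chosen so that every selection gadget is dormant in both of them. Thus any reconfiguration must, for each $i\in U$, pass through a configuration in which some $S_j\ni i$ is active, so the collection of sets that are ever activated forms a set cover; each such gadget is activated and — since $\mathbb B'$ is all-dormant — later deactivated, at $\Omega(d)$ exchanges apiece, which gives the lower bound $\Omega(d\tau)$. The matching upper bound comes from the obvious schedule: activate the gadgets of an optimum cover, run all $N$ coverage tests, then deactivate and restore.

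The step I expect to be the main obstacle is engineering, with \emph{only} partition matroids, a selection gadget for which the coverage test of $i$ can be passed via $S_j$ \emph{only after genuinely activating $S_j$}, rather than by some cheap local shortcut that frees a single $e_{j,i}$ in isolation — for without this the $S_j$-structure would not constrain the reconfiguration at all, and the optimum would collapse to $\Theta(N)$ independently of $\tau$. Realizing this ``all-or-nothing'' behaviour seems to require routing it through a per-set bottleneck resource whose use is globally rationed by a block capacity, and then proving a \emph{no-shortcut lemma}: along every reconfiguration sequence, making any edge element of $S_j$ available to $B_2$ forces a full activation of $S_j$'s gadget, and partial activations cannot be recycled across distinct coverage tests so as to beat the $\Omega(d\tau)$ bound. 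I would prove this by maintaining an invariant on the intermediate feasible basis sequences and charging exchanges to the gadgets and tests they serve. The remaining items — checking that $M_1,M_2$ really are partition matroids, that $\mathbb B,\mathbb B'$ are feasible, that $\log n=\Theta(\log N)$, and that the additive $\Theta(N)$ term is dominated by $d\tau$ via $\tau\ge N/d$ (or, if need be, via taking polynomially many disjoint copies of the instance) — are routine by comparison.
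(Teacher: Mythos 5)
Your overall skeleton — reduce from \prb{Set Cover}, build per-set ``activation'' gadgets and per-element ``coverage'' gadgets in two partition matroids, charge reconfiguration steps to activated gadgets, and prove a no-shortcut invariant — is the right direction and is what the paper does. But there is a genuine gap, and it is precisely the one you flag yourself: you never construct the selection gadget, never prove the no-shortcut lemma, and the specific calibration you propose (activation cost $\Theta(d)$, additive term $\Theta(N)$ absorbed via $\tau\ge N/d$ on regular instances) makes that lemma substantially harder to prove than it needs to be. With an activation budget proportional to $|S|$, there is no slack between ``fully open $S$'' and ``cheaply free a single element of $S$,'' so the invariant you would need (``making any $e_{j,i}$ available forces paying $\Omega(d)$, and partial activations cannot be reused across tests'') is exactly the statement your gadget must enforce, and you offer no construction that does so. A proof sketch that defers its central lemma to an unconstructed gadget is not a proof.

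The paper sidesteps this entirely by decoupling the activation cost from the set size: for each $S$ it appends a chain of $L=2n^2$ rank-$1$ blocks $\{s_S^i,s_S^{i+1}\}$, so that the unique free slot $s_S^{L+1}$ must be shuttled along the whole chain before $s_S^1$ can become free; only then does a rank-$|S|$ block $M_S^0$ on $\{c_u^{\elemid{u}{S}}\mid u\in S\}\cup\{s_S^1\}$ permit releasing a single $c_u^{\elemid{u}{S}}$, which in turn unlocks $e_u^3$ and hence the $e_u^1\leftrightarrow e_u^2$ swap. Because $L\gg n$, the total per-element overhead ($7n$ in their Lemma~4.2) and any conceivable cheap local move are dominated by a single $2L$ round trip, so the shortest length is pinned at $\Theta(L\cdot\tau)$ and the cover extraction (their Lemma~4.3) is just $\floor{\ell/2L}$. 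The no-shortcut argument then reduces to a one-line pigeonhole on the rank-$|S|$ block: if $s_S^1\in B_1$ for every $S\ni u^*$ throughout, then all $c_{u^*}^j$ stay in $B_2$, which pins $e_{u^*}^3$ in $B_1$ and freezes $e_{u^*}^1,e_{u^*}^2$ — contradiction. Your route would also need regular (equal-size-$d$) \prb{Set Cover} hardness, which the paper avoids because $L$ is uniform across sets. So: adopt a uniformly large chain length (polynomial in $n$, independent of set sizes) as the bottleneck; then the ``all-or-nothing'' property is automatic and the rest of your argument goes through without needing the delicate invariant you were worried about.
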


\paragraph*{Related work.}
Due to the property of ``one-by-one exchange'' in combinatorial reconfiguration, various reconfiguration problems are naturally related to matroids~\cite{BercziS03:arXiv:Reconfiguration,BousquetHKMS23:FVSreconf,ItoDHPSUU11:TCS:complexity,ItoIKKKMS23:time-respecting-arbo,ItoIKNOW23:arborescences,KobayashiMS23:Arborescences}.
As mentioned above, Ito et al.~\cite{ItoDHPSUU11:TCS:complexity} studied \prb{Spanning Tree Reconfiguration} and showed that every spanning tree can be transformed into any other spanning tree in a graph.
Given this fact, Ito et al.~\cite{ItoIKNOW23:arborescences} further considered a directed analogue of this problem, in which the objective is to determine whether two arborescences (i.e., directed spanning trees) in a directed graph are transformed into each other.
Contrary to the undirected counterpart, for a (weakly) connected directed graph $D = (V, A)$, the pair $(A, \mathcal F)$ is not a matroid in general, where
$\mathcal{F}$ denotes the family of arc sets $F \subseteq A$, each of which forms an arborescence of $D$, 
while it is the collection of common bases of two matroids, i.e., $\mathcal F = \mathcal B_1 \cap \mathcal B_2$ for some matroids $(A, \mathcal B_1)$ and $(A, \mathcal B_2)$.
They still showed that every arborescence can be transformed into any other arborescence in a directed graph.
As a generalization of~\cite{ItoIKNOW23:arborescences},
Kobayashi, Mahara, and Schwarcz~\cite{KobayashiMS23:Arborescences} studied the reconfiguration problem of (not the \emph{sequence} of but) the \emph{union} of disjoint arborescences.
Namely, in their setting, a feasible solution is
the union $\bigcup_{i = 1}^k F_i$ of disjoint arborescences $F_1, F_2, \dots, F_k$,
and two feasible solutions $\bigcup_{i = 1}^k F_i$ and $\bigcup_{i = 1}^k F_i'$
are adjacent if and only if
there are $x \in \bigcup_{i = 1}^k F_i \setminus \bigcup_{i = 1}^k F_i'$ and $y \in \bigcup_{i = 1}^k F_i' \setminus \bigcup_{i = 1}^k F_i$
such that $\bigcup_{i = 1}^k F_i - x + y = \bigcup_{i = 1}^k F_i'$.
We note that even if two feasible solutions $\bigcup_{i = 1}^k F_i$ and $\bigcup_{i = 1}^k F_i'$
are adjacent in the sense of~\cite{KobayashiMS23:Arborescences},
the corresponding tuples $(F_1, F_2, \dots, F_k)$ and $(F_1', F_2', \dots, F_k')$ may not be adjacent in our sense.
It is worth mentioning that the reconfiguration problem of the \emph{union} of disjoint bases is trivially solvable, since it is just the reconfiguration problem of bases of the union of matroids; see \Cref{sec:preliminaries} for the definition of the matroid union.
For other reconfiguration problems related to (common bases of) matroids, see~\cite{BousquetHKMS23:FVSreconf,ItoIKNOW23:arborescences}.

Our work is highly related to a recent work of B\'erczi, M\'atrav\"olgyi, and Schwarcz~\cite{BercziS03:arXiv:Reconfiguration}.
They considered the symmetric exchange version of our problem, where two (not necessarily feasible) basis sequences $\mathbb B = (B_1, \dots, B_k)$ and $\mathbb B' = (B'_1, \dots, B'_k)$ are adjacent if there are $x \in B_i \setminus B_j$ and $y \in B_j \setminus B_i$ such that
\begin{align*}
    \mathbb B' = (B_1, \dots, B_{i-1}, B_i - x + y, B_{i + 1}, \dots, B_{j-1}, B_j - y + x, B_{j+1}, \dots, B_k).
\end{align*}
This reconfiguration problem has received considerable attention as its reconfigurability is essentially equivalent to White's conjecture~\cite{WHITE198081}.
(See~\cite{BercziS03:arXiv:Reconfiguration} for a comprehensive overview of White's conjecture.)
In particular, the conjecture states that for any pair of two feasible basis sequences $\mathbb B = (B_1, \dots, B_k)$ and $\mathbb B' =(B'_1, \dots, B'_k)$, $\mathbb B$ is reconfigurable to $\mathbb B'$ (by symmetric exchanges) if and only if $\bigcup_{i=1}^k B_i = \bigcup_{i=1}^k B'_i$.
The conjecture is confirmed for graphic matroids~\cite{Blasiak08:toric-ideal,FarberRS85:edge-disjoint}, which means that for every pair of sequences of edge-disjoint $k$ spanning trees $(T_1, \dots, T_k)$ and $(T'_1, \dots, T'_k)$ in a graph, one is reconfigurable to the other by symmetric exchanges if $\bigcup_{i=1}^kE(T_i) = \bigcup_{i=1}^kE(T'_i)$. This is in contrast to our setting, having an impossible case as seen in~\Cref{fig:no-instance}.

We would like to emphasize that our setting is also quite natural as it can be seen as a reconfiguration problem in the \emph{token jumping model}, which is best studied in the context of combinatorial reconfiguration~\cite{Heuvel13:survey,Nishimura18:survey}.
In particular, our problem can be regarded as a reconfiguration problem for \emph{multiple} solutions.
One of the most well-studied problems in this context is \prb{Coloring Reconfiguration}~\cite{BonamyB18:coloring,BonsmaC09:TCS:coloring,HatanakaIZ19:coloring}, which can be seen as a multiple solution variant of \prb{Independent Set Reconfiguration}.
There are several results working on reconfiguration problems for multiple solutions, such as \prb{Disjoint Paths Reconfiguration}~\cite{ItoIK0MNOO23:ReroutingDP} and \prb{Disjoint Shortest Paths Reconfiguration}~\cite{SaitoEIU23:VDSPreconf}.

\section{Preliminaries}\label{sec:preliminaries}
For a positive integer $n$, let $[n] \coloneqq \{1, 2, \ldots, n\}$.
For integers $p$ and $q$ with $p \leq q$, let $[p, q] \coloneqq \{p, p + 1, \ldots, q - 1, q\}$. 
For sets $X$ and $Y$, the \emph{symmetric difference} of $X$ and $Y$ is defined as $X \symdif Y \coloneqq (X \setminus Y) \cup (Y \setminus X)$.

Let $E$ be a finite set and let $\mathcal B \subseteq 2^E$ be a nonempty collection of subsets of $E$.
We say that $M = (E, \mathcal B)$ is a \emph{matroid} if for $B, B' \in \mathcal B$ and $x \in B \setminus B'$, there is $y \in B' \setminus B$ satisfying $(B \setminus \{x\}) \cup \{y\} \in \mathcal B$.
For notational convenience, we may write $B - x + y$ instead of $(B \setminus \{x\}) \cup \{y\}$.
Each set in $\mathcal B$ is called a \emph{basis} of $M$.
It is easy to verify that each basis of $M$ has the same cardinality, which is called the \emph{rank} of $M$.
In this paper, we may assume that, unless explicitly stated otherwise, matroids are given as \emph{basis oracles}.
In this model, we can access a matroid $M = (E, \mathcal B)$ through an oracle that decides whether $X \in \mathcal B$ for given $X \subseteq E$.\footnote{\rev{Our algorithm also runs in polynomial time even when the input matroids are given as \emph{independence} or \emph{rank} oracles.}}
We also assume that we can evaluate this query in time $|E|^{O(1)}$.

Let $M_1 = (E_1, \mathcal B_1), \dots, M_k = (E_k, \mathcal B_k)$ be $k$ matroids and let $\mathbb M = (M_1, \dots, M_k)$.
For $i \in [k]$, let $B_i$ be a basis of $M_i$.
A tuple $\mathbb B = (B_1, \dots, B_k)$ of bases is called a \emph{basis sequence} of $\mathbb M$.
Since $E_i$ and $E_j$ may have an intersection for distinct $i$ and $j$, $B_i$ and $B_j$ are not necessarily disjoint.
We say that $\mathbb B$ is \emph{feasible} if $B_i \cap B_j = \emptyset$ for distinct $i,j \in [k]$.
For two feasible basis sequences $\mathbb B = (B_1, \dots, B_k)$ and $\mathbb B' = (B'_1, \dots, B'_k)$ of $\mathbb M$, we say that $\mathbb B$ is \emph{adjacent} to $\mathbb B'$ if there is an index $i \in [k]$ such that $B_j = B'_j$ for $j \in [k]\setminus \{i\}$ and $B'_i = B_i - x + y$ for some $x \in B_i$ and $y \in E_i \setminus B_i$.
A \emph{reconfiguration sequence} between $\mathbb B$ and $\mathbb B'$ is a tuple of feasible basis sequences $\rseq{\mathbb B_0, \mathbb B_1, \dots, \mathbb B_\ell}$ such that $\mathbb B_0 = \mathbb B$, $\mathbb B_\ell = \mathbb B'$, and $\mathbb B_{i-1}$ and $\mathbb B_i$ are adjacent for all $i \in [\ell]$.
The \emph{length} of the reconfiguration sequence is defined as $\ell$.

Let $M = (E, \mathcal B)$ be a matroid.
The \emph{dual} of $M$ is a pair $M^* = (E, \{E \setminus B \mid B \in \mathcal B\})$, which also forms a matroid~\cite{Oxley:Matroid:2006}.
A \emph{coloop} of a matroid $M$ is an element $e \in E$ that belongs to all the bases of $M$, that is, $e \in B$ for all $B \in \mathcal B$.
Let $M = (E, \mathcal B)$ and $M' = (E', \mathcal B')$ be matroids and 
let $\mathcal B^*$ be the family of maximal sets in $\{B \cup B' \mid B \in \mathcal B, B' \in \mathcal B'\}$.
Then, the pair $(E \cup E', \mathcal B^*)$ is the \emph{matroid union} of $M$ and $M'$, which is denoted $M \vee M'$.
It is well known that $M \vee M'$ is also a matroid~\cite{Oxley:Matroid:2006}.
We can generalize this definition for more than two matroids: For $k$ matroids $M_1, \dots, M_k$, the matroid union of $M_1, \dots, M_k$ is denoted by $\bigvee_{i=1}^k M_i$.
\rev{If the ground sets $E$ and $E'$ of $M$ and $M'$ are disjoint, then $M \vee M'$ is called the \emph{direct sum} of $M$ and $M'$, and we write $M \oplus M'$ instead of $M \vee M'$.}

In our proofs, we use certain matroids.
Let $E$ be a finite set.
For an integer $r$ with $0 \le r \le |E|$, the \emph{rank-$r$ uniform matroid} on $E$ is the pair $(E, \{B \subseteq E \mid |B| = r\})$, that is, the set of bases consists of all size-$r$ subsets of $E$.
Let $\{E_1, \dots, E_k\}$ be a partition of $E$ (i.e., $E = \bigcup_{i=1}^k E_i$ and $E_i \cap E_j = \emptyset$ for distinct $i,j \in [k]$).
For each $i \in [k]$, we set $r_i$ as an integer with $0 \le r_i \le |E_i|$.
\rev{If $\mathcal{B} \subseteq 2^E$ consists of the sets $B$ satisfying $|B \cap E_i| = r_i$ for each $i \in [k]$,
then the pair $(E, \mathcal B)$ forms a matroid, called the \emph{partition matroid}.}
We can construct such a partition matroid by taking the \rev{direct sum} of the rank-$r_i$ uniform matroid\rev{s} on $E_i$ for $i$.

Let $D = (V, A)$ be a directed graph.
For an arc $a \in A$, we write $\head{a}$ to denote the head of $e$ and $\tail{a}$ to denote the tail of $e$.
A \emph{matching} of $D$ is a set $N \subseteq A$ of arcs such that no pair of arcs in $N$ share a vertex. 
A \emph{walk} in $D$ is a sequence $(v_0, a_1, v_1, a_2, \dots, a_\ell, v_\ell)$ such that $\tail{a_i} = v_{i-1}$ and $\head{a_i} = v_{i}$ for all $i \in [\ell]$.
When no confusion is possible, we may identify the directed graph with its arc set.

\section{Polynomial-time algorithm}

This section is devoted to a polynomial-time algorithm for \prb{\ourprob},
implying \Cref{thm:solvability}.
Let $M_1 = (E_1, \mathcal{B}_1), M_2 = (E_2, \mathcal{B}_2), \dots, M_k = (E_k, \mathcal{B}_k)$ be $k$ matroids that are given as basis oracles.
We denote by $\mathbb{M} = (M_1, M_2, \dots, M_k)$ the tuple of matroids $M_1, \dots, M_k$.

Let $\mathbb{B} = (B_1, \ldots, B_k)$ and $\mathbb{B}' = (B_1', \ldots, B_k')$ be two feasible basis sequences of $\mathbb{M}$.
Take any coloop $x$ of the matroid union $\bigvee_{i=1}^k M_i$.
Since all bases in $\mathbb B$ are mutually disjoint, $\bigcup_{i=1}^k B_i$ is a basis of $\bigvee_{i=1}^k M_i$.
This implies that $x \in B_i$ for some $i$.
Suppose that there is a feasible basis sequence $(B_1, \dots, B_{i-1}, B_i - x + y, B_{i+1}, \dots, B_k)$
of $\mathbb M$ obtained from $\mathbb B$ by exchanging $x \in B_i$ with $y \in E_i \setminus B_i$ in $M_i$.
As it is feasible, $\bigcup_{i=1}^k B_i - x + y$ is also a basis of $\bigvee_{i=1}^k M_i$, contradicting the fact that $x$ is a coloop.
This implies that every coloop in $\bigvee_{i=1}^k M_i$ belongs to a basis in a feasible basis sequence that is reconfigurable from $\mathbb B$.
More formally, let $K$ denote the set of coloops in $\bigvee_{i = 1}^k M_i$.
If $\mathbb B$ is reconfigurable to $\mathbb B'$, we have $(K \cap B_1, \dots, K \cap B_k) = (K \cap B'_1, \dots, K \cap B'_k)$.
The following theorem says that this necessary condition is also sufficient.
\begin{theorem}\label{thm:reconfigure}
    Let $K$ be the set of coloops of $\bigvee_{i = 1}^k M_i$.
    For feasible basis sequences $\mathbb{B} = (B_1, \ldots, B_k)$ and $\mathbb{B}' = (B_1', \ldots, B_k')$ of $\mathbb{M}$,
    one is reconfigurable to the other if and only if $(K \cap B_1, \ldots, K \cap B_k) = (K \cap B'_1, \ldots, K \cap B'_k)$.
\end{theorem}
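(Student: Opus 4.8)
The \emph{only if} direction is exactly the observation recorded just before the statement: a coloop $x$ of $\bigvee_{i=1}^{k}M_i$ can never be exchanged out of the part currently containing it, since otherwise $\bigcup_i B_i$ would cease to be a basis of $\bigvee_i M_i$, so $(K\cap B_1,\dots,K\cap B_k)$ is preserved along every reconfiguration sequence. For sufficiency I would first reduce to the coloop-free case. Assume $K\cap B_i=K\cap B_i'=:K_i$ for all $i$. Since $\mathbb B$ is feasible and every coloop lies in the basis $\bigcup_iB_i$, the sets $K_i$ are disjoint and partition $K$, and $K_i\subseteq B_i\cap B_i'$. Put $M_i^{\circ}:=(M_i/K_i)\setminus\bigl((E_i\cap K)\setminus K_i\bigr)$, a matroid on $E_i\setminus K$, and let $\mathbb B^{\circ}:=(B_i\setminus K)_i$ and $\mathbb B'^{\circ}:=(B_i'\setminus K)_i$; these are feasible basis sequences of $\mathbb M^{\circ}:=(M_i^{\circ})_i$. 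Because coloops never move, a single exchange of $\mathbb M$ at any feasible basis sequence with coloop-data $(K_i)_i$ involves no element of $K$ and projects to a single exchange of $\mathbb M^{\circ}$, and conversely every exchange of $\mathbb M^{\circ}$ lifts back; hence $\mathbb B$ is reconfigurable to $\mathbb B'$ iff $\mathbb B^{\circ}$ is reconfigurable to $\mathbb B'^{\circ}$. Moreover $\bigvee_iM_i^{\circ}$ is $\bigvee_iM_i$ with all of its coloops contracted (and superfluous copies deleted), and contracting every coloop of a matroid leaves a matroid with no coloop. So it suffices to prove: \emph{if $N:=\bigvee_{i=1}^{k}M_i$ has no coloop, then any two feasible basis sequences of $\mathbb M$ are reconfigurable to each other.}

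Assume $N$ has no coloop, so $N$ has rank $\sum_i\operatorname{rk}(M_i)$ and its bases are exactly the sets $\bigcup_iB_i$ over feasible basis sequences $(B_1,\dots,B_k)$. Given feasible $\mathbb B\ne\mathbb B'$, I would induct on $\Phi:=\sum_{i=1}^{k}|B_i\setminus B_i'|>0$. Work in the exchange digraph $D_{\mathbb B}$ on $\bigcup_iE_i$ with an arc $x\to y$ labeled $i$ whenever $x\in B_i$, $y\in E_i\setminus B_i$ and $B_i-x+y\in\mathcal B_i$; for $v\in\bigcup_iB_i$ let $c(v)$ be the part containing $v$. Call a walk $w_0\to w_1\to\dots\to w_p$ in $D_{\mathbb B}$ \emph{good} if $w_p\notin\bigcup_iB_i$, if $w_j\in B_{c(w_j)}\setminus B'_{c(w_j)}$ for all $j<p$, and if $w_{j+1}\in B'_{c(w_j)}$ for all $j\le p-2$ (only the final arc $w_{p-1}\to w_p$ is unconstrained). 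Given a $\Phi$-decreasing good walk, chosen of minimum length, performing the exchanges $B_{c(w_j)}\gets B_{c(w_j)}-w_j+w_{j+1}$ in the order $j=p-1,\dots,0$ is a reconfiguration sequence from $\mathbb B$ to a feasible $\mathbb B''$: minimality gives the ``no-chord'' property that makes each exchange stay within its matroid and keeps every intermediate sequence feasible, exactly as for shortest augmenting paths in matroid intersection. A direct count gives $\Phi(\mathbb B'',\mathbb B')<\Phi(\mathbb B,\mathbb B')$, since every touched part $i=c(w_j)$ discards $w_j\notin B_i'$ and receives $w_{j+1}$, which lies in $B_i'$ for all but possibly the last $j$; the induction hypothesis then joins $\mathbb B''$ to $\mathbb B'$.

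The remaining, and main, difficulty is to guarantee a $\Phi$-decreasing good walk whenever $\Phi>0$, and this is the only place I expect coloop-freeness to be essential. Starting from any $e\in B_i\setminus B_i'$, grow a walk by repeatedly passing from the current endpoint $w\in B_{c(w)}\setminus B'_{c(w)}$ to a vertex of $B'_{c(w)}\setminus\operatorname{cl}_{M_{c(w)}}(B_{c(w)}-w)$ --- a nonempty set, as $B'_{c(w)}$ has rank $\operatorname{rk}(M_{c(w)})$ while $\operatorname{cl}_{M_{c(w)}}(B_{c(w)}-w)$ has rank $\operatorname{rk}(M_{c(w)})-1$ --- noting that every such vertex lies in $B'_{c(w)}$ and hence, if it meets $\bigcup_iB_i$ at all, again lies in some $B_j\setminus B_j'$ with $j\ne c(w)$. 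If some vertex reached this way lands outside $\bigcup_iB_i$, or some reached vertex has an arc of $D_{\mathbb B}$ to an element outside $\bigcup_iB_i$, then completing the walk by that arc yields a good, $\Phi$-decreasing walk. Since $D_{\mathbb B}$ is finite, the construction can fail only if the set $R$ of vertices reachable from $e$ by these $\mathbb B'$-alternating arcs stays entirely inside $\bigcup_i(B_i\setminus B_i')$ and no vertex of $R$ has any arc out of $\bigcup_iB_i$. One shows such an $R$ cannot exist: a set closed under the $\mathbb B'$-alternating arcs such that $B_\ell-w$ spans $E_\ell\setminus\bigcup_iB_i$ in $M_\ell$ for every $w\in R$ with $\ell=c(w)$ can be used --- by replacing, in each part, its $R$-elements with their $\mathbb B'$-counterparts --- to exhibit a feasible basis sequence from which some fixed element of $R$ can never be removed, i.e.\ an element of $R$ lying in every basis of $N$, contradicting coloop-freeness. (This is the matroid-union counterpart of the fact that an element of a basis is ``escapable'' unless it is a coloop.) With this in hand, the induction of the previous paragraph closes, and the theorem follows.
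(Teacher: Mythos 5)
Your only-if direction is the same observation the paper records before the theorem, and your preliminary reduction to the coloop-free case (contracting the coloops of $\bigvee_i M_i$ in each $M_i$ and deleting the superfluous copies) is a genuinely different preprocessing step that the paper does not perform; the paper instead carries the coloop hypothesis along directly. That reduction is plausible, but you assert without proof both that $\bigvee_i M_i^{\circ}$ equals $\bigvee_i M_i$ with its coloops contracted and that it is coloop-free; since matroid union does not in general commute with contraction, this step needs a short argument rather than just a sentence.

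The real gap is in the existence of your ``good walk.'' Your definition forces every vertex $w_0,\dots,w_{p-1}$ to lie in $\bigcup_i(B_i\setminus B_i')$ and every arc except the very last one to be $\mathbb B'$-alternating, so the only permitted way to reach $E\setminus\bigcup_iB_i$ is by a \emph{single} unconstrained arc attached at the end. The paper's tadpole-walk is strictly more general: when the $\mathbb B'$-alternating walk closes into a cycle, it attaches to some cycle vertex an \emph{arbitrary} directed path to $E\setminus\bigcup_iB_i$, whose internal vertices are not required to lie in $\bigcup_i(B_i\setminus B_i')$ and whose length can exceed one. That flexibility is exactly what Lemma~\ref{lem:coloop} provides (a non-coloop has a directed \emph{path}, not necessarily an arc, to outside), and it is what makes the cycle case go through. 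Your escape route is much narrower, and the contradiction you invoke does not close it: ``no vertex of $R$ has an arc out of $\bigcup_iB_i$'' is strictly weaker than ``no vertex of $R$ has a directed path out of $\bigcup_iB_i$,'' and it is the latter, not the former, that characterizes coloops. So a trapped $R$ in your sense is perfectly compatible with coloop-freeness (a vertex of $R$ may leave $\bigcup_iB_i$ only after passing through vertices outside $\bigcup_i(B_i\setminus B_i')$), and the ``replace $R$-elements by their $\mathbb B'$-counterparts'' step that is supposed to manufacture a coloop is not spelled out and, as far as I can see, does not produce one. To repair this you would either have to relax the good-walk definition to permit an unconstrained tail of arbitrary length from a cycle vertex (which is precisely the paper's tadpole-walk, with its ``shortcut-free'' condition playing the role of your ``no-chord'' minimality), or give an actual proof that a trapped $R$ yields a coloop.

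Two minor points. First, a minimum-length good walk is automatically a simple directed path (any cycle could be excised while preserving goodness and $\Phi$-decrease), so your reverse-order exchange scheme is fine once existence is settled, but note that the paper's Type~II reconfiguration deliberately traverses the cycle and therefore needs the more careful $W_p$ bookkeeping. Second, your $\Phi$-decreasing count assumes the touched parts $c(w_j)$ are essentially distinct; this is true along a shortcut-free/no-chord path but should be said, since a long walk visits a given part several times in general.
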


The proof of \Cref{thm:reconfigure} is given in \Cref{subsec:proof} below.
Before the proof, we introduce the concept of \emph{exchangeability graphs}
and present its properties in \Cref{subsec:exchangeability}.

\subsection{Exchangeability graph}\label{subsec:exchangeability}
For a matroid $M = (E, \mathcal{B})$ and a basis $B \in \mathcal{B}$,
the \emph{exchangeability graph} of $M$ with respect to $B$,
denoted as $D(M, B)$,
is a directed graph whose vertex set is the ground set $E$ of $M$
and whose arc set $A$ is
\begin{align*}
    A := \{ (x, y) \mid x \in B \text{ and } y \in E \setminus B \text{ such that }B -x + y \in \mathcal{B} \}.
\end{align*}
Note that $D(M, B)$ is bipartite;
all arcs go from $B$ to $E \setminus B$.

Let $N = \{ (x_1, y_1), (x_2, y_2), \dots, (x_n, y_n) \} \subseteq A$ be a matching of $D(M, B)$ and
let $B \symdif N := B \setminus \{ x_1, x_2, \dots, x_n \} \cup \{ y_1, y_2, \dots, y_n \}$.
We say that $N$ is \emph{unique} if there is no perfect matching $N'$ other than $N$ in the subgraph of $D(M, B)$ induced by $\{x_1, \dots, x_n, y_1, \dots, y_n\}$.
The following is a well-known lemma in matroid theory, called the \emph{unique-matching lemma}.
\begin{lemma}[e.g., {\cite[Lemma~2.3.18]{Murota2010-ar}}]\label{lem:unique-matching}
If $N = \{ (x_1, y_1), (x_2, y_2), \dots, (x_n, y_n) \}$ is a unique matching in the subgraph of $D(M, B)$ induced by $\{ x_1, \dots, x_n, y_1, \dots, y_n \}$,
then $B \symdif N \in \mathcal{B}$.
\end{lemma}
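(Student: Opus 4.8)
The plan is to induct on $n = |N|$, peeling off one exchange at a time. The base case $n \le 1$ is immediate: for $n = 0$ we have $B \symdif N = B \in \mathcal B$, and for $n = 1$ the single arc $(x_1, y_1)$ of $D(M, B)$ gives $B \symdif N = B - x_1 + y_1 \in \mathcal B$ directly from the definition of the exchangeability graph.

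For the inductive step, view the subgraph $H$ of $D(M, B)$ induced by $\{x_1, \dots, x_n, y_1, \dots, y_n\}$ as an undirected bipartite graph; by hypothesis $N$ is its unique perfect matching. The first ingredient is a purely graph-theoretic fact: if every vertex on the $B$-side of $H$ had degree at least $2$, then starting from any such vertex one could build an $N$-alternating closed walk, extract an $N$-alternating cycle from it, and swap the matching edges along that cycle to obtain a perfect matching of $H$ different from $N$, contradicting uniqueness. Hence some $B$-side vertex $x_\ell$ has degree exactly $1$ in $H$, its unique neighbour being its matching partner $y_\ell$. Relabelling the indices, we may assume $\ell = n$; thus $(x_n, y_j) \notin A$ for every $j \in [n-1]$.

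Now set $B_1 := B - x_n + y_n$, which is a basis because $(x_n, y_n) \in A$, and let $N' := N \setminus \{(x_n, y_n)\}$ and $V' := \{x_1, \dots, x_{n-1}, y_1, \dots, y_{n-1}\}$. The heart of the argument is that replacing $B$ by $B_1$ is harmless on $V'$: (i) every $(x_i, y_i)$ with $i \in [n-1]$ is still an arc of $D(M, B_1)$, and (ii) every arc $(x_i, y_j)$ of $D(M, B_1)$ with $i, j \in [n-1]$ is already an arc of $D(M, B)$. Each of these follows from a single use of the basis exchange axiom, in which the degree-$1$ property of $x_n$ is exactly what excludes the undesired replacement element. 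For (i), apply the axiom to $B - x_i + y_i \in \mathcal B$ and $B_1$ with the element $x_n \in (B - x_i + y_i) \setminus B_1$: the replacing element lies in $\{x_i, y_n\}$, and choosing $x_i$ would yield $B - x_n + y_i \in \mathcal B$, i.e. the forbidden arc $(x_n, y_i)$; so the choice is $y_n$, giving $B_1 - x_i + y_i = (B - x_i + y_i) - x_n + y_n \in \mathcal B$. For (ii), writing $C := B_1 - x_i + y_j \in \mathcal B$, apply the axiom to $C$ and $B$ with the element $y_n \in C \setminus B$: the replacing element lies in $\{x_n, x_i\}$, and choosing $x_i$ would yield $B - x_n + y_j \in \mathcal B$, again the forbidden arc $(x_n, y_j)$; so the choice is $x_n$, giving $B - x_i + y_j = C - y_n + x_n \in \mathcal B$.

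By (i) the restriction of $D(M, B_1)$ to $V'$ contains $N'$, and by (ii) it is a subgraph of the restriction of $D(M, B)$ to $V'$. Since appending the arc $(x_n, y_n)$ turns any perfect matching of $D(M, B)[V']$ into a perfect matching of $H$, uniqueness of $N$ forces $N'$ to be the unique perfect matching of $D(M, B)[V']$; combined with the subgraph relation, $N'$ is then also the unique perfect matching of $D(M, B_1)[V']$. Applying the induction hypothesis to the matroid $M$, the basis $B_1$, and the unique matching $N'$ of size $n - 1$ gives $B_1 \symdif N' \in \mathcal B$, and since $B_1 \symdif N' = (B - x_n + y_n) \symdif N' = B \symdif N$ the induction is complete. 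The main obstacle is precisely the two exchange-axiom arguments in (i) and (ii): one must check that the single degree-$1$ vertex $x_n$ is exactly what rules out the ``bad'' replacement element in each case, so that peeling off the exchange $(x_n, y_n)$ preserves both the presence of the residual matching and its uniqueness in the residual exchangeability graph.
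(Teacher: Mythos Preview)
The paper does not prove this lemma at all; it is stated with a citation to Murota's textbook and used as a black box. So there is no ``paper's own proof'' to compare against.

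Your argument is correct. The inductive scheme---locate a $B$-side vertex $x_n$ of degree~$1$ in the induced bipartite graph (guaranteed by uniqueness, since otherwise an $N$-alternating cycle would yield a second perfect matching), peel off the exchange $(x_n,y_n)$, and verify that the residual matching $N'$ is still present and still unique in $D(M,B_1)[V']$---is precisely the classical proof of the unique-matching lemma. Both verifications (i) and (ii) are carried out correctly: in each case the basis-exchange axiom offers two candidate replacement elements, and the degree-$1$ property of $x_n$ rules out the one that would create a forbidden arc $(x_n,y_j)$ with $j<n$. The only cosmetic remark is that the standard presentation (as in Murota) usually phrases the first step as a topological ordering of the indices so that $(x_i,y_j)\notin A$ whenever $i<j$, and then peels from the end; your ``find a degree-$1$ vertex and relabel'' is exactly one step of that ordering, so the proofs are the same in substance.
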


The \emph{exchangeability graph} of $\mathbb{M}$ with respect to $\mathbb{B}$,
denoted as $D(\mathbb{M}, \mathbb{B})$,
is the union of the exchangeability graphs $D(M_i, B_i) = (E_i, A_i)$ of $M_i$ with respect to $B_i$ for all $i \in [k]$.
In the following, the vertex set of $D(\mathbb M, \mathbb B)$ is denoted by $E$, that is, $E = \bigcup_{i=1}^k E_i$.
We note that, for distinct $i, j \in [k]$,
the two arc sets $A_i$ and $A_j$ are disjoint,
since $B_i \cap B_j = \emptyset$.
A walk $W$ in $D(\mathbb{M}, \mathbb{B})$ is called a \emph{tadpole-walk}
if $W$ is of the form
\begin{align}\label{eq:W}
    (x_0, a_1, x_1, \ldots, x_{m-1}, a_{m}, x_m = x_0, a_{m+1}, x_{m+1}, a_{m+2}, \ldots, a_{n}, x_n)
\end{align}
for some $0 \leq m < n$
such that the former part $(x_0, a_1, x_1, \ldots, x_{m-1}, a_{m}, x_m = x_0)$ forms a directed cycle and the latter part $(x_m = x_0, a_{m+1}, x_{m+1}, \ldots, x_n)$ forms a directed path with $x_n \in E \setminus \bigcup_{i = 1}^k B_i$,
where $x_0, x_1, \dots, x_n$ are distinct except for $x_0 = x_m$ if $m > 0$.
See Figure~\ref{fig:tadpole} for an illustration.
\begin{figure}
    \centering
    \includegraphics[width=0.55\textwidth]{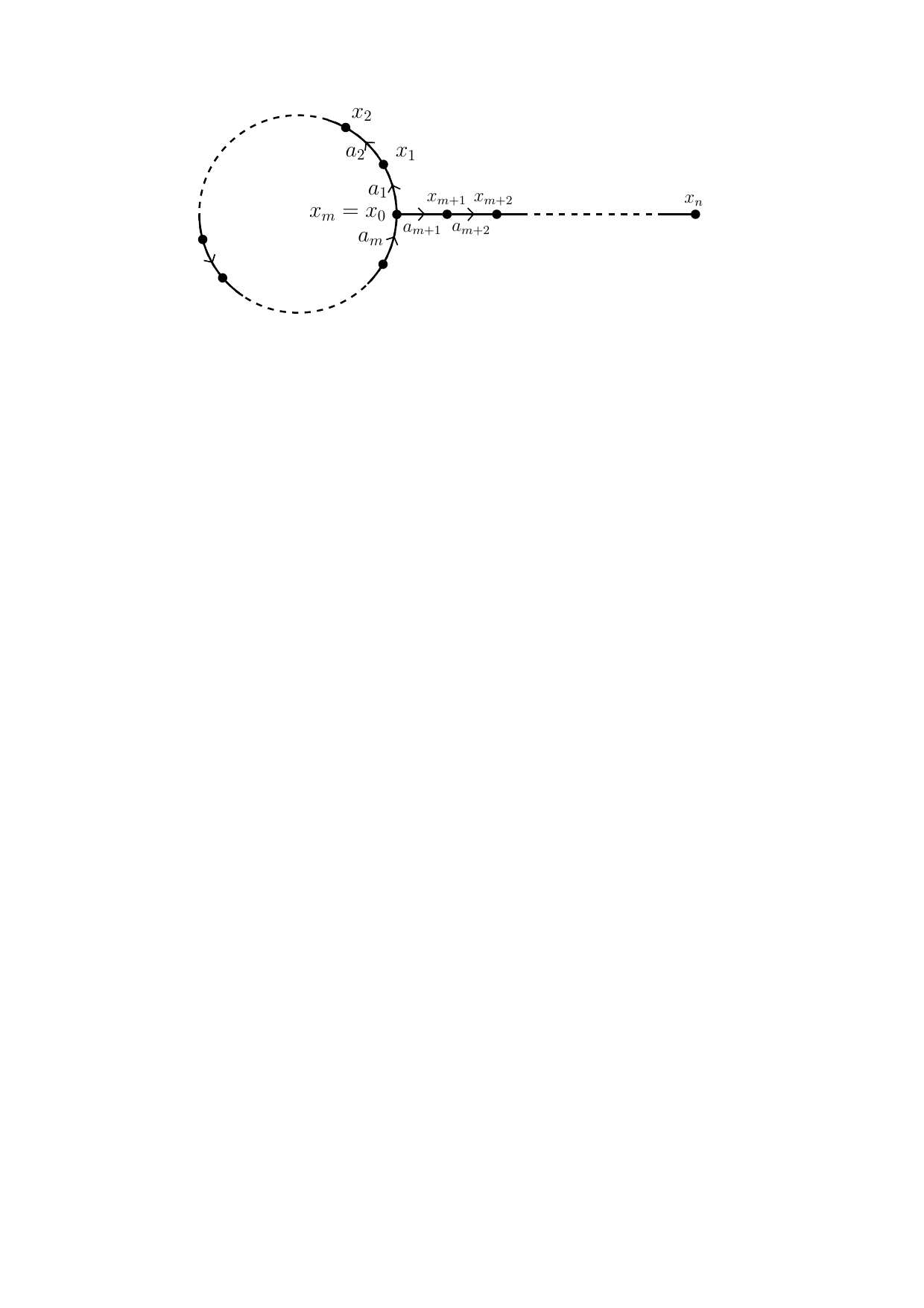}
    \caption{A tadpole-walk starting from $x_0$ and ending at $x_n$.}
    \label{fig:tadpole}
\end{figure}
The former part can be empty; in this case, $W$ is just a directed path
ending at some vertex in $E \setminus \bigcup_{i = 1}^k B_i$.
We introduce a total order $\prec$ on the vertex set of $W$ as:
The smallest vertex is $x_0 (= x_m)$
and
$x_i \prec x_j$ if and only if $i < j$
for other vertices $x_i, x_j$.
We say that $W$ is \emph{shortcut-free} if, for all $i \in [k]$ and two arcs $a, a' \in W \cap A_i$ with $\tail{a} \prec \tail{a'}$,
we have $(\tail{a} , \head{a'}) \notin A_i$.
A subgraph $W'$ of $D(\mathbb{M}, \mathbb{B})$ is said to be \emph{valid}
if it is the disjoint union of a (possibly empty) directed path ending at some vertex in $E \setminus \bigcup_{i = 1}^k B_i$ and a (possibly empty) directed cycle.
For a valid subgraph $W'$ of $D(\mathbb{M}, \mathbb{B})$,
we define
$\mathbb{B} \symdif W' := \left(B_1 \symdif (W' \cap A_1), B_2 \symdif (W' \cap A_2), \dots, B_k \symdif (W' \cap A_k)\right)$.
Observe first that $W' \cap A_i$ forms a matching in $D(M_i, B_i)$ for each $i$.
To see this, suppose that there are two arcs $a$ and $a'$ in $W' \cap A_i$ that share a vertex $x$.
Since each component of $W'$ is either a directed path or a directed cycle, we can assume that $\head{a} = \tail{a'} = x$.
However, $x \notin B_i$ as $\head{a} = x$ and $x \in B_i$ as $\tail{a'} = x$, a contradiction.
Observe next that $|\bigcup_{i=1}^k B_i| = |\bigcup_{i=1}^k (B_i \symdif (W' \cap A_i))|$.
This follows from the fact that the path component has a sink vertex in $E \setminus \bigcup_{i = 1}^k B_i$ (if it is nonempty).

The following two lemmas play important roles in the proof of \Cref{thm:reconfigure}.
\begin{lemma} \label{lem:feasibility}
    Suppose that $W$ is a shortcut-free tadpole-walk in $D(\mathbb M, \mathbb B)$
    and $W'$ is a valid subgraph of $W$.
    Then $\mathbb{B} \symdif W'$ is a feasible basis sequence of $\mathbb M$.
\end{lemma}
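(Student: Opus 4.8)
The plan is to verify, for $\mathbb{B}\symdif W'=(B_1\symdif N_1,\dots,B_k\symdif N_k)$ with $N_i:=W'\cap A_i$, the two requirements in the definition of a feasible basis sequence: first that each $B_i\symdif N_i$ is a basis of $M_i$, and second that $B_1\symdif N_1,\dots,B_k\symdif N_k$ are pairwise disjoint. I will use the facts recorded just before the lemma that each $N_i$ is a matching in $D(M_i,B_i)$ and that $A_i\cap A_j=\emptyset$ for distinct $i,j$, together with the following consequences of $W'$ being a disjoint union of a directed path and a directed cycle: every vertex has in-degree and out-degree at most $1$ in $W'$, and the sink of the path component (if nonempty) lies in $E\setminus\bigcup_{\ell=1}^k B_\ell$. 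Write $T_i:=\{\tail{a}:a\in N_i\}\subseteq B_i$ and $H_i:=\{\head{a}:a\in N_i\}$, so that $H_i\cap B_i=\emptyset$ and $B_i\symdif N_i=(B_i\setminus T_i)\cup H_i$.

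For the first requirement I show each $N_i$ is a \emph{unique} matching in the subgraph of $D(M_i,B_i)$ induced by $T_i\cup H_i$, which yields $B_i\symdif N_i\in\mathcal{B}_i$ by \Cref{lem:unique-matching}. Enumerate the arcs of $N_i$ as $b_1,\dots,b_r$ so that their (pairwise distinct, since $N_i$ is a matching) tails $u_p:=\tail{b_p}$ satisfy $u_1\succ u_2\succ\dots\succ u_r$ with respect to the order $\prec$ associated with $W$, and put $v_p:=\head{b_p}$. It suffices to check that $(u_p,v_q)\notin A_i$ for all $q<p$: then $v_1$ has $u_1$ as its only neighbour in the induced subgraph, so any perfect matching pairs them, and deleting $u_1,v_1$ and iterating shows the matching is unique. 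Now for $q<p$ we have $u_p\prec u_q$, and $b_p,b_q\in W\cap A_i$, so the shortcut-free property of $W$ applied to the pair $b_p,b_q$ gives exactly $(\tail{b_p},\head{b_q})=(u_p,v_q)\notin A_i$.

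For the second requirement, suppose some element $v$ lies in $(B_i\symdif N_i)\cap(B_j\symdif N_j)$ for distinct $i,j$, and split into cases according to which of $B_i\setminus T_i$, $H_i$ contains $v$, and likewise for $j$. If $v\in(B_i\setminus T_i)\cap(B_j\setminus T_j)$ then $v\in B_i\cap B_j=\emptyset$, a contradiction. If $v\in H_i\cap H_j$ then $v$ is the head of an arc of $W'$ lying in $A_i$ and of an arc of $W'$ lying in $A_j$; these two arcs are distinct because $A_i\cap A_j=\emptyset$, so $v$ has in-degree at least $2$ in $W'$, again impossible. In the remaining case (up to swapping $i$ and $j$) $v\in(B_i\setminus T_i)\cap H_j$: since $v\in B_i\subseteq\bigcup_{\ell=1}^k B_\ell$, the vertex $v$ is not the sink of $W'$, while $v\in H_j$ makes $v$ the head of an arc of $W'$ and hence gives $v$ in-degree $1$; being a non-sink vertex of in-degree $1$ in a path-plus-cycle, $v$ also has out-degree $1$, and its out-arc lies in some $A_\ell$ and has tail $v$, so $v\in B_\ell$ and thus $\ell=i$ by feasibility of $\mathbb{B}$; then that out-arc belongs to $N_i$ and witnesses $v\in T_i$, contradicting $v\notin T_i$. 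Hence $\mathbb{B}\symdif W'$ is feasible and, by the first part, a feasible basis sequence of $\mathbb{M}$.

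I expect the only genuinely delicate point to be the direction in which the shortcut-free condition is invoked. Ordering the arcs of $N_i$ from left to right along $W$ does not visibly satisfy the uniqueness criterion, because the cycle and the path of a tadpole-walk are glued at $x_0=x_m$; ordering them instead by \emph{decreasing} $\prec$-value of their tails is precisely what matches the matched pairs to the hypothesis ``$\tail{a}\prec\tail{a'}$'' of the shortcut-free property, after which both parts of the argument are routine.
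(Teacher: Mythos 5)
Your proof is correct, and the two halves of your argument relate to the paper's proof differently. For the part showing each $B_i\symdif N_i\in\mathcal B_i$, your argument is essentially the paper's: both invoke \Cref{lem:unique-matching} and both extract uniqueness of the matching from the ``triangular'' structure that the shortcut-free property imposes. The paper labels the arcs with $\prec$-increasing tails and argues (tersely) that any competing perfect matching would force a forbidden shortcut arc $(\tail{b_i},\head{b_j})$ with $i<j$; you label with $\prec$-decreasing tails and run the explicit sequential-elimination argument ($v_1$ has a unique in-neighbour $u_1$, remove and recurse). These are the same fact viewed from opposite ends of the ordering --- the paper could equally well have eliminated from $v_\ell$ downward --- so your worry about which direction to order is really only a choice of labeling; the content is identical. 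Where you genuinely diverge from the paper is the disjointness of the modified bases. The paper derives it from a cardinality argument: it observes before the lemma that $|B_i|=|B_i\symdif N_i|$ for each $i$ and $|\bigcup_i B_i|=|\bigcup_i(B_i\symdif N_i)|$ (the latter because the only vertex the path of $W'$ can ``gain'' is its sink, which lies outside $\bigcup_i B_i$), and concludes disjointness by counting. You instead do a direct vertex-level case analysis using the in/out-degree bounds in a valid subgraph, with the key case being $v\in(B_i\setminus T_i)\cap H_j$, where $v$ not being the sink forces $v$ to also be a tail in $N_i$, a contradiction. Both arguments are sound; the paper's counting is slicker and avoids cases, while yours is more elementary and makes the mechanism that prevents collisions explicit.
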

\begin{proof}
    We first observe that $B_i \symdif (W' \cap A_i)$ and $B_j \symdif (W' \cap A_j)$ are disjoint for distinct $i,j \in [k]$.
    This follows from the following facts: $|B_i| = |B_i \symdif (W' \cap A_i)|$ for each $i$ and $|\bigcup_{i=1}^k B_i| = |\bigcup_{i=1}^k (B_i \symdif (W' \cap A_i))|$.
    Thus, it suffices to show that $B_i \symdif (W' \cap A_i) \in \mathcal{B}_i$ for each $i$.

    Let $b_1, b_2, \dots, b_\ell$ be the arcs in the matching $W' \cap A_i$;
    we may assume that $i < j$ if and only if $\tail{b_i} \prec \tail{b_j}$.
    Since $W$ is shortcut-free,
    we have $(\tail{b_i}, \head{b_j}) \notin A_i$ for any distinct $i,j \in [\ell]$ with $i < j$.
    Observe that $W' \cap A_i$ forms a unique matching in $D(M_i, B_i)$.
    This can be seen by considering the other case that $W' \cap A_i$ is not unique in $D(M_i, B_i)$, yielding that $D(M_i, B_i)$ has an arc $(\tail{b_i}, \head{b_j})$ for some $i, j \in [\ell]$ with $i < j$.
    Thus $B_i \symdif (W' \cap A_i) \in \mathcal{B}_i$ by \Cref{lem:unique-matching}.
\end{proof}

\begin{lemma}\label{lem:coloop}
    Let $\mathbb{B} = (B_1, \ldots, B_k)$ be a feasible basis sequence of $\mathbb{M}$
    and $B := \bigcup_{i=1}^k B_i$.
    For $y \in E \setminus B$,
    we denote by $T_y$ the set of vertices that are reachable to $y$ in $D(\mathbb{M}, \mathbb{B})$, \rev{that is, the set of vertices $x$ in $E$ such that there is a directed path from $x$ to $y$ in $D(\mathbb{M}, \mathbb{B})$}.
    Then the set of coloops of $M \coloneqq \bigvee_{i=1}^kM_i$ is equal to $B \setminus \bigcup_{y \in E \setminus B} T_y$.
\end{lemma}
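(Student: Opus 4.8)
The plan is to prove the two inclusions of the claimed identity separately. Write $M := \bigvee_{i=1}^k M_i$, $B := \bigcup_{i=1}^k B_i$, let $K$ be the set of coloops of $M$, and set $S := \bigcup_{y \in E \setminus B} T_y$. Since $B_1, \dots, B_k$ are pairwise disjoint bases of $M_1, \dots, M_k$, the set $B$ is a basis of $M$; hence $\operatorname{rank}_M(E) = |B|$ and $K \subseteq B$. Moreover $y \in T_y$ for each $y \in E \setminus B$, so $E \setminus B \subseteq S$, whence $E \setminus S = B \setminus S$ and $E = S \sqcup (B \setminus S)$ (a disjoint union). The goal is thus to show $K = B \setminus S$.

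For the inclusion $B \setminus S \subseteq K$, the key step I would establish is the rank bound $\operatorname{rank}_M(S) \le |B \cap S|$, which I would derive from two ingredients. First, the elementary inequality $\operatorname{rank}_M(S) \le \sum_{i=1}^k \operatorname{rank}_{M_i}(S \cap E_i)$: by the definition of matroid union, any independent set $J$ of $M$ is contained in a basis $\bigcup_i C_i$ with $C_i \in \mathcal B_i$, so if $J \subseteq S$ then $J = \bigcup_i (J \cap C_i)$ with each $J \cap C_i$ independent in $M_i$ and contained in $S \cap E_i$, giving $|J| \le \sum_i \operatorname{rank}_{M_i}(S \cap E_i)$. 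Second, the claim $\operatorname{rank}_{M_i}(S \cap E_i) = |B_i \cap S|$ for every $i$; since $B_i \cap S$ is independent in $M_i$ and contained in $S \cap E_i$, it suffices to check that $B_i \cap S$ spans $S \cap E_i$ in $M_i$. Elements of $B_i \cap S$ are trivially in the span, so take $z \in (E_i \setminus B_i) \cap S$ and inspect the fundamental circuit $C$ of $z$ with respect to $B_i$ in $M_i$: every $w \in C \setminus \{z\}$ satisfies $B_i - w + z \in \mathcal B_i$, i.e., $(w, z) \in A_i$, so $w$ reaches a vertex of $E \setminus B$ through $z$ and hence $w \in S$; thus $C \setminus \{z\} \subseteq B_i \cap S$ and $z \in \operatorname{cl}_{M_i}(B_i \cap S)$. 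Granting $\operatorname{rank}_M(S) \le |B \cap S|$, for any $x \in B \setminus S$ we have $E - x = S \sqcup ((B \setminus S) \setminus \{x\})$, so $\operatorname{rank}_M(E - x) \le \operatorname{rank}_M(S) + (|B \setminus S| - 1) \le |B \cap S| + |B \setminus S| - 1 = |B| - 1 < \operatorname{rank}_M(E)$; hence every basis of $M$ contains $x$, i.e., $x \in K$.

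For the reverse inclusion $K \subseteq B \setminus S$, note first that no element of $E \setminus B$ is a coloop, since $B$ is a basis avoiding it. It remains to show that no $x \in B \cap S$ is a coloop. Since $x \in S$, there is a directed path in $D(\mathbb M, \mathbb B)$ from $x$ to some $y \in E \setminus B$; I would take a shortest such path $P$. Minimality forces $P$ to be shortcut-free: if $a, a' \in P$ were arcs of some $A_i$ with $\tail{a}$ preceding $\tail{a'}$ along $P$ and $(\tail{a}, \head{a'}) \in A_i$, then replacing the subpath of $P$ from $\tail{a}$ to $\head{a'}$ by the single arc $(\tail{a}, \head{a'})$ would yield a strictly shorter $x$--$y$ path (the vertex set of the new path is a proper subset of that of $P$, so it is still a path). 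Viewing $P$ as a tadpole-walk with empty cycle part, and as a valid subgraph of itself, \Cref{lem:feasibility} shows that $\mathbb B \symdif P$ is a feasible basis sequence of $\mathbb M$; its union is therefore a disjoint union of bases of $M_1, \dots, M_k$ of total size $|B| = \operatorname{rank}_M(E)$, hence a basis of $M$. This basis avoids $x$: being the tail of the first arc of $P$, $x$ is deleted from the unique block $B_j$ containing it, and being the source of $P$, it is the head of no arc of $P$ and so is re-added to no block. Thus $x$ is not a coloop, completing the proof.

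The only genuinely nonroutine point is the structural claim $\operatorname{rank}_{M_i}(S \cap E_i) = |B_i \cap S|$ used in the first inclusion; the crux is that the in-neighbours of a vertex $z$ in $A_i$ are exactly $C \setminus \{z\}$ for the fundamental circuit $C$ of $z$ with respect to $B_i$, and that reachability to $E \setminus B$ is inherited backward along arcs, so this whole circuit stays inside $S$. Everything else is either standard (the matroid-union rank inequality, subadditivity of rank under union) or is already packaged in \Cref{lem:feasibility}.
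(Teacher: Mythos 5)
Your proof is correct, but it takes a noticeably more self-contained route than the paper's. The paper's proof is only a few lines: it reduces non-coloopness of $x \in B$ to the existence of $y \in E \setminus B$ with $B - x + y$ a basis of $M$ (via basis exchange), and then invokes Schrijver's Theorem~42.4, which characterizes exactly when $B - x + y$ is a basis of the matroid union in terms of a directed $x$--$y$ path in $D(\mathbb M, \mathbb B)$. You instead prove the two inclusions from scratch. For $K \subseteq B \setminus S$ you take a shortest $x$--$y$ path, note that minimality forces it to be shortcut-free, and reuse \Cref{lem:feasibility} to produce a basis of $M$ avoiding $x$; this effectively reproves the ``path implies exchange'' half of the cited theorem using machinery already developed in the paper. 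For $B \setminus S \subseteq K$ you establish the rank bound $\operatorname{rank}_M(S) \le |B \cap S|$ via the matroid-union rank inequality combined with the fundamental-circuit observation that $B_i \cap S$ spans $S \cap E_i$ (because the in-neighbours of $z$ in $A_i$ are exactly the elements of its fundamental circuit and reachability to $E \setminus B$ is inherited backward along arcs), and then conclude $\operatorname{rank}_M(E - x) < \operatorname{rank}_M(E)$ for $x \in B \setminus S$; this replaces the ``only if'' half of the cited theorem. The trade-off is that your argument is longer but avoids the external citation, making visible exactly which structural facts about the exchangeability graph are being used. Both approaches are sound.
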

\begin{proof}
    Clearly $B$ contains all coloops of $M$ as $B$ is a basis of $M$.
    By considering the basis exchange axiom for the dual matroid $M^*$ of $M$,
    an element $x \in B$ is not a coloop of $M$ if and only if there is $y \in E \setminus B$ such that $B - x + y$ is a basis of $M$.
    Here, it easily follows from~\cite[Theorem~42.4]{book/Schrijver03} that,
    for $x \in B$ and $y \in E \setminus B$,
    the set $B - x + y$ is a basis of $M$
    if and only if there is a directed path from $x$ to $y$ in $D(\mathbb{M}, \mathbb{B})$.
    Hence the existence of such $y \in E \setminus B$
    can be rephrased as
    the existence of a directed path from $x$ to some vertex $y \in E \setminus B$ in $D(\mathbb{M}, \mathbb{B})$.
    This implies that
    the set of coloops of $M$ is equal to $B \setminus \bigcup_{y \in E \setminus B} T_y$,
    where $T_y$ denotes the set of vertices that are reachable to $y$ in $D(\mathbb{M}, \mathbb{B})$.
\end{proof}

Using~\Cref{lem:coloop}, we can decide in polynomial time whether the condition $(K \cap B_1, \ldots, K \cap B_k) = (K \cap B'_1, \ldots, K \cap B'_k)$ in \Cref{thm:reconfigure} holds as follows.
Let $E = \bigcup_{i=1}^k E_i$.
We can construct the exchangeability graph $D(\mathbb{M}, \mathbb{B})$ with $\sum_{i=1}^k |E_i|^2 \le k|E|^2$ oracle calls.
By~\Cref{lem:coloop}, we can compute the set $K$ of coloops of $M$ in time $O(|E|^2)$ using a standard graph search algorithm.

\subsection{Proof of \texorpdfstring{\Cref{thm:reconfigure}}{}}\label{subsec:proof}
In this subsection,
we provide the proof of \Cref{thm:reconfigure},
and then we also see that \Cref{thm:solvability} follows from our proof of \Cref{thm:reconfigure}.

We define the distance $d(\mathbb{B}, \mathbb{B}')$ between $\mathbb{B}$ and $\mathbb{B}'$ by
$d(\mathbb{B}, \mathbb{B}') := \sum_{i = 1}^k |B_i \symdif B_i'|$.
As we have already seen the only-if part of \Cref{thm:reconfigure} in the previous subsection, 
in the following, we show the if part by induction on $d(\mathbb{B}, \mathbb{B}')$.

It is easy to see that $d(\mathbb{B}, \mathbb{B}') = 0$ if and only if $\mathbb{B} = \mathbb{B}'$.
Suppose that $d(\mathbb{B}, \mathbb{B}')>0$.
If there is a feasible basis sequence $\mathbb{B}'' = (B_1'', B_2'', \dots, B_k'')$ of $\mathbb M$ such that
$\mathbb{B}$ is reconfigurable to $\mathbb{B}''$ and $d(\mathbb{B}'', \mathbb{B}') < d(\mathbb{B}, \mathbb{B}')$,
we have $(B_1' \cap K, \dots, B_k' \cap K) = (B_1 \cap K, \dots, B_k \cap K) = (B_1'' \cap K, \dots, B_k'' \cap K)$.
Hence $\mathbb{B}''$ is reconfigurable to $\mathbb{B}'$ by induction,
which implies that $\mathbb{B}$ is reconfigurable to $\mathbb{B}'$.
Thus, our goal is to find such a feasible basis sequence $\mathbb{B}''$.
To this end, we first compute a shortcut-free tadpole-walk $W$ in $D(\mathbb M, \mathbb B)$ and then transform $\mathbb B$ to $\mathbb B''$ one-by-one along this $W$.
A crucial observation in this transformation is that each intermediate \rev{basis sequence} is of the form $\mathbb B \symdif W'$ for some valid subgraph $W'$ of $W$, meaning that it is a feasible basis sequence of $\mathbb M$ by \Cref{lem:feasibility}.

Take any $x_0 \in \bigcup_{i = 1}^{k} B_i \setminus B'_i$, say, $x_0 \in B_{i_0} \setminus B'_{i_0}$.
Then there is $x_1 \in B'_{i_0} \setminus B_{i_0}$ such that $B_{i_0} - x_0 + x_1 \in \mathcal{B}_{i_0}$.
Hence we have $a_1 = (x_0, x_1) \in A_{i_0}$.
If $x_1 \in E \setminus \bigcup_{i = 1}^k B$, 
we obtain a tadpole-walk $(x_0, a_1, x_1)$;
we are done.
Otherwise, this vertex $x_1$ belongs to $B_{i_1}$ for some $i_1 \ (\neq i_0)$.
In particular, by $x_1 \in B'_{i_0}$, we have $x_1 \in B_{i_1} \setminus B'_{i_1}$.
Hence there is $x_2 \in B'_{i_1} \setminus B_{i_1}$ such that $B_{i_1} - x_1 + x_2 \in \mathcal{B}_{i_1}$, implying $a_2 = (x_1, x_2) \in A_{i_1}$.
By repeating this argument, we can find either of the following subgraphs of $D(\mathbb M, \mathbb B)$:
\begin{description}
    \item[Type~I:] a directed path $(x_0, a_1, x_1, \ldots, a_{n}, x_n)$ satisfying that $x_n \in E \setminus \bigcup_{i = 1}^k B_i$ and that $x_{\ell} \in B_{i_{\ell}} \setminus B'_{i_{\ell}}$, $x_{\ell + 1} \in B'_{i_{\ell}} \setminus B_{i_{\ell}}$, and $a_{\ell+1} \in A_{i_\ell}$ for all $\ell \in [0, n - 1]$.
    \item[Type~II:] a directed cycle $(x_p, a_{p+1}, x_{p+1}, \ldots, x_{q - 1}, a_{q}, x_{q} = x_p)$ satisfying that $x_{\ell} \in B_{i_{\ell}} \setminus B'_{i_{\ell}}$, $x_{\ell + 1} \in B'_{i_{\ell}} \setminus B_{i_\ell}$, and $a_{\ell+1} \in A_{i_{\ell}}$ for all $\ell \in [p, q - 1]$.
\end{description}
In the former case (Type~I), the resulting directed path is a tadpole-walk.
Consider the latter case (Type~II).
By the assumption that $(B_1 \cap K, \dots, B_k \cap K) = (B_1' \cap K, \dots, B_k' \cap K)$,
none of the vertices $x_p, \dots, x_q$ belongs to the set $K$ of coloops.
This implies that, by \Cref{lem:coloop}, $D(\mathbb M, \mathbb B)$ has a directed path from each vertex in the cycle to a vertex in $E \setminus \bigcup_{i = 1}^k B_i$.
We can choose such a directed path $(x_r, b_1, y_1, \ldots, b_m, y_m)$ from a vertex $x_r$ in the cycle to a vertex $y_m$ in $E \setminus \bigcup_{i = 1}^k B_i$ so that the path is arc-disjoint from the cycle, by taking a shortest one among all such paths.
Then, the walk $(x_r, a_r, x_{r+1}, \ldots, x_r, b_1, y_1, \ldots, b_m, y_m)$
forms a tadpole-walk.
We denote by $W$ the tadpole-walk obtained in these ways (Type~I and~II).
In the following,
by rearranging the indices, we may always assume that
$W$ is of the form~\eqref{eq:W},
where the former part $C = (x_0, a_1, x_1, \ldots, x_{m-1}, a_{m}, x_m = x_0)$ is a directed cycle and the later part $P = (x_m = x_0, a_{m+1}, x_{m+1}, \ldots, x_n)$ is a directed path in $D(\mathbb M, \mathbb B)$.
Note that the directed cycle $C$ can be empty, which corresponds to Type~I.

We next update the above $W$ so that $W$ becomes shortcut-free.
Suppose that $W$ is not shortcut-free.
Then there are arcs $a, a' \in W \cap A_i$ such that $\tail{a} \prec \tail{a'}$ satisfying $a'' \coloneqq (\tail{a}, \head{a'}) \in A_i$.
Let $a = (x_p, x_{p+1})$ and let $a' = (x_q, x_{q + 1})$.
Since $W \cap A_i$ is a matching in $D(\mathbb M, \mathbb B)$, we have $p + 1 \neq q$.
We then execute one of the following update procedure:
\begin{itemize}
    \item If $a$ and $a'$ belong to the directed path $P$,
    then update $W$ as 
    \begin{align*}
        W \leftarrow (x_0, \ldots, x_{m-1}, a_{m}, x_m = x_0, a_{m+1}, \dots, a_{p}, x_p, a'', x_{q+1}, \ldots, x_n).
    \end{align*}
    \item If $a$ and $a'$ belong to the directed cycle $C$,
    then update $W$ as
    \begin{align*}
        W \leftarrow (x_0, a_1, \ldots, a_p, x_p, a'', x_{q+1}, \dots, x_m = x_0, a_{m+1}, x_{m+1}, \ldots, x_n).
    \end{align*}
    \item If $a$ belongs to $C$ and $a'$ belongs to $P$,
    then update $W$ as
    \begin{align*}
        W \leftarrow (x_p, a_{p+1}, \dots, a_p, x_p, a'', x_{q+1}, \dots, x_n).
    \end{align*}
\end{itemize}
See~\Cref{fig:tadpole-t123} for illustrations.
\begin{figure}
    \centering
    \includegraphics[width=0.9\textwidth]{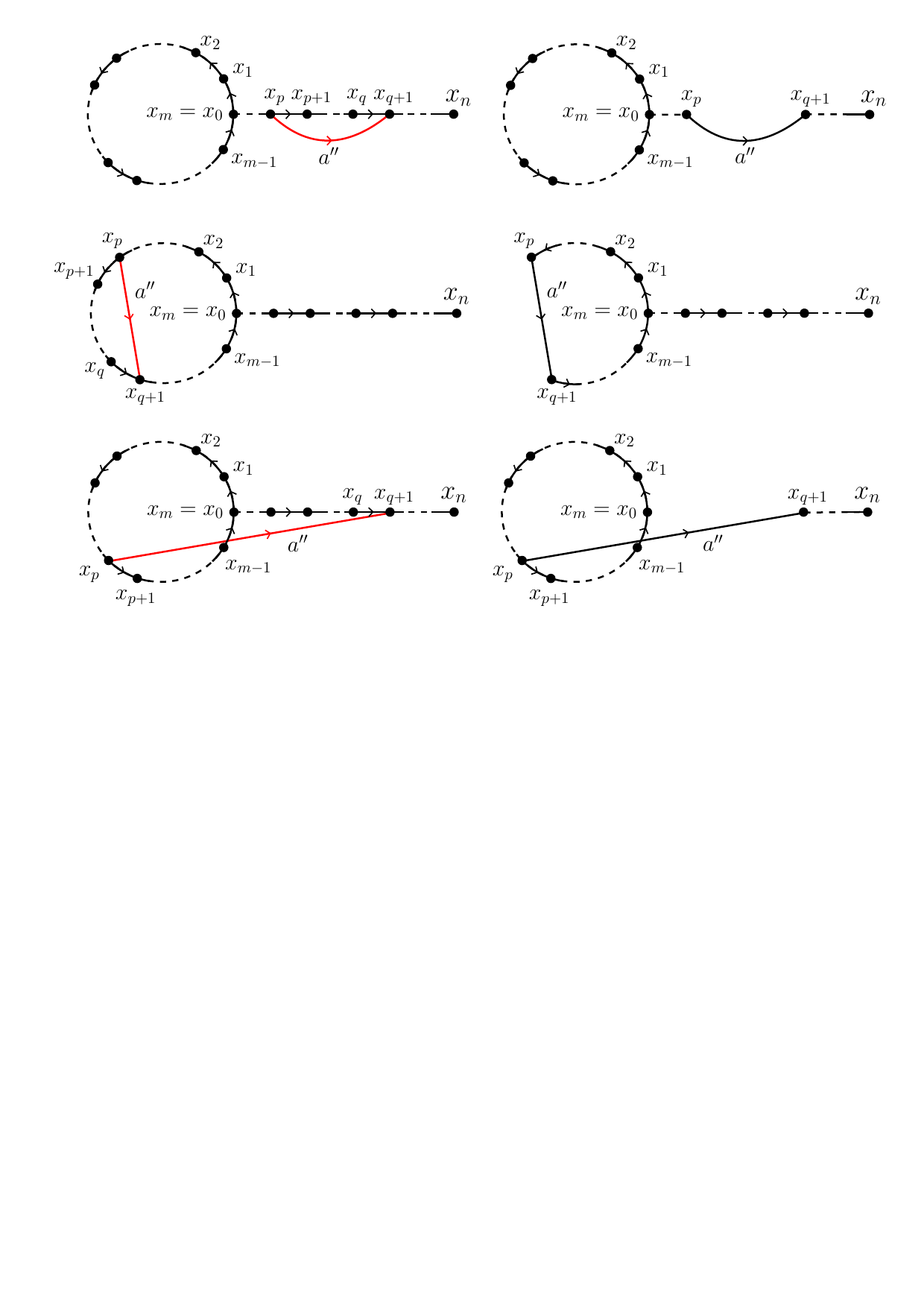}
    \caption{The figure depicts tadpole-walks (with shortcuts) and their updated tadpole-walks. }
    \label{fig:tadpole-t123}
\end{figure}

Suppose that $W$ is a tadpole-walk of Type~I.
In this case, the second and third cases never occur.
By the choice of $a = (x_p, x_{p+1})$, $a' = (x_q, x_{q+1})$, and $a'' = (x_p, x_{q+1})$,
we have $x_p \in B_i \setminus B_i'$, $x_{q+1} \in B_i' \setminus B_i$, and $(x_p, x_{q+1}) \in A_i$.
Moreover, the updated $W$ is a directed path ending at $x_n \in E \setminus \bigcup_{i = 1}^k B_i$, which implies that $W$ is still a tadpole-walk of Type~I.
Suppose next that $W$ is of Type~II.
In the first and third cases, the cycle part does not change; the updated $W$ is still of a tadpole-walk Type~II.
In the second case, the cycle part is shortened by $a''$ but the updated $W$ is still a tadpole-walk as well.
By the choice of $a$, $a'$, and $a''$,
we have $x_p \in B_i \setminus B_i'$, $x_{q+1} \in B_i' \setminus B_i$, and $(x_p, x_{q+1}) \in A_i$.
Hence the resulting $W$ is still a tadpole-walk of Type~II.
Since this update procedure strictly reduces the size of $W$,
we can eventually obtain a shortcut-free tadpole-walk in polynomial time.

Finally, we construct a reconfiguration sequence based on a shortcut-free tadpole-walk $W$ of each type.
Suppose that $W$ is of Type~I, i.e.,
$W = (x_0, a_1, x_1, \ldots, x_n)$ is a directed path with $n \geq 1$.
For $p \in [n-1]$,
let $W_p$ denote the subgraph of $W$ induced by
$\{ a_{n-p+1}, \dots, a_n \}$.
Then $W_p$ forms a directed path
$(x_{n-p}, a_{n-p+1}, x_{n-p+1}, \dots, x_n)$,
which implies that $W_p$ is valid.
By \Cref{lem:feasibility},
$\mathbb{B} \symdif W_p$ is a feasible basis sequence for each $p$.
Furthermore,
we have
$\mathbb{B} \symdif W_p = (\mathbb{B} \symdif W_{p-1}) \symdif (x_{n-p+1}, x_{n-p})$
(in which $(x_{n-p+1}, x_{n-p})$, the reverse of $a_{n-p+1}$, can be viewed as an arc in $D(\mathbb{M}, \mathbb{B} \symdif W_{p-1})$).
This implies that $\mathbb{B} \symdif W_{p-1}$ and $\mathbb{B} \symdif W_{p}$ are adjacent for all $p \in [n-1]$,
where $W_0 := \emptyset$.
Hence
\begin{align*}
    \langle \mathbb{B} = \mathbb{B} \symdif W_0, \mathbb{B} \symdif W_1, \mathbb{B} \symdif W_2, \dots, \mathbb{B} \symdif W_{n-1} = \mathbb{B} \symdif W \rangle
\end{align*}
is a reconfiguration sequence from $\mathbb B$ to $\mathbb B \symdif W$.
In addition, since $x_{\ell} \in B_{i_\ell} \setminus B_{i_\ell}'$ and $x_{\ell + 1} \in B_{i_\ell}' \setminus B_{i_\ell}$ for each $\ell \in [0, n-1]$,
we have $d(\mathbb{B} \symdif W, \mathbb{B}') = d(\mathbb{B}, \mathbb{B}') - 2n < d(\mathbb{B}, \mathbb{B}')$.

Suppose next that $W$ is of Type~II, i.e.,
$W$ is of the form~\eqref{eq:W}
with $0 < m < n$,
where the (nonempty) former part $C = (x_0, a_1, x_1, \ldots, x_{m-1}, a_{m}, x_m = x_0)$ is a directed cycle and the later part $P = (x_m = x_0, a_{m+1}, x_{m+1}, \ldots, x_n)$ is a directed path.
For $p \in [n-1]$,
let $W_p$ denote the subgraph of $W$ induced by
$\{ a_{n-p+1}, \dots, a_n \}$,
which forms a directed path $(x_{n-p}, a_{n-p+1}, x_{n-p+1}, \dots, x_n)$ as in the case of Type~I
and is valid.
For $p \in [n, 2n-m-1]$,
let $W_p$ denote the subgraph of $W$ induced by
$\{ a_1, a_2, \dots, a_m \} \cup \{ a_{(m -n+2) + p}, a_{(m -n+2) + (p+1)}, \dots, a_n \}$,
where $W_{2n-m-1}$ is defined as $C$.
In this case, $W_p$ forms the disjoint union of the directed cycle $C$ and the subpath of $P$ starting from $x_{m-n+1+p}$ ending at $x_n \in E \setminus \bigcup_{i = 1}^k B_i$;
the subpath is empty if $p = 2n-m-1$.
Thus $W_p$ is also valid.
By \Cref{lem:feasibility},
$\mathbb{B} \symdif W_p$ is feasible for each $p \in [2n-m-1]$.
Furthermore, we have
\begin{align*}
    \mathbb{B} \symdif W_{p} =
    \begin{cases}
        (\mathbb{B} \symdif W_{p-1}) \symdif (x_{n-p+1}, x_{n-p}) & \text{if $p \in [n-1]$},\\
        (\mathbb{B} \symdif W_{p-1}) \symdif (x_{m+1}, x_1) & \text{if $p = n$},\\
        (\mathbb{B} \symdif W_{p-1}) \symdif a_{(m -n+1) + p} & \text{if $p \in [n+1, 2n-m-1]$}.
    \end{cases}
\end{align*}
See Figure~\ref{fig:tadpole-p=n} for the case of $p = n$.
\begin{figure}
    \centering
    \includegraphics[width=0.45\textwidth]{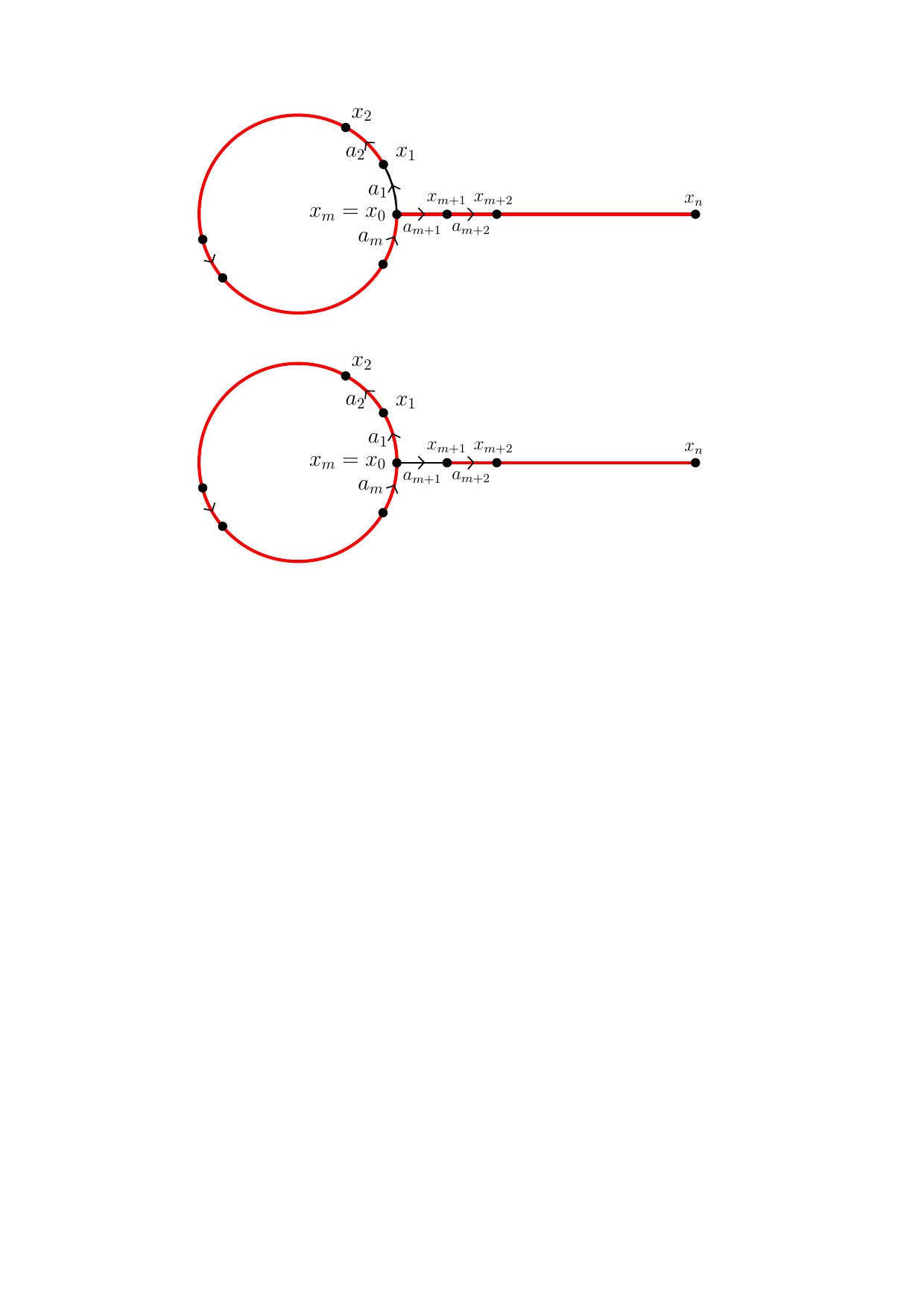}
    \caption{The bold red walk in the upper digraph represents $W_p$ for $p = n-1$,
    and
    that in the lower digraph for $p = n$.}
    \label{fig:tadpole-p=n}
\end{figure}
This implies that $\mathbb{B} \symdif W_{p-1}$ and $\mathbb{B} \symdif W_{p}$ are adjacent for all $p \in [2n-m-1]$,
where $W_0 := \emptyset$.
Hence
\begin{align*}
    \langle \mathbb{B} = \mathbb{B} \symdif W_0, \mathbb{B} \symdif W_1, \mathbb{B} \symdif W_2, \dots, \mathbb{B} \symdif W_{2n- m -1} = \mathbb{B} \symdif C \rangle
\end{align*}
is a reconfiguration sequence from $\mathbb B$ to $\mathbb B \symdif C$.
In addition, since $x_{\ell} \in B_{i_\ell} \setminus B_{i_\ell}'$ and $x_{\ell + 1} \in B_{i_\ell}' \setminus B_{i_\ell}$ for each $\ell \in [m]$,
we have $d(\mathbb{B} \symdif C, \mathbb{B}') = d(\mathbb{B}, \mathbb{B}') - 2m < d(\mathbb{B}, \mathbb{B}')$.
This completes the proof of \Cref{thm:reconfigure}.

The above proof immediately turns into an algorithm for finding a feasible basis sequence $\mathbb{B}''$ with $d(\mathbb{B}'', \mathbb{B}') < d(\mathbb{B}, \mathbb{B}')$
in polynomial time.
As shown in the previous subsection, we can construct the exchangeability graph $D(\mathbb{M}, \mathbb{B})$ using $k|E|^2$ oracle calls.
We can compute a shortcut-free tadpole-walk in $D(\mathbb{M}, \mathbb{B})$ in $O(|E|^2)$ time.
Thus, we can compute a feasible basis sequence $\mathbb{B}''$ of $\mathbb{M}$ with $d(\mathbb{B}'', \mathbb{B}') < d(\mathbb{B}, \mathbb{B}')$ such that $\mathbb{B}$ is reconfigurable to $\mathbb{B}''$ in $O(|E|^2)$ time and $|E|^2$ oracle calls.
Since $d(\mathbb{B}, \mathbb{B}')$ is at most $2|E|$,
we can obtain an entire reconfiguration sequence from $\mathbb B$ to $\mathbb B'$ in $O(|E|^3)$ time and $|E|^3$ oracle calls in the case where $\mathbb{B}$ is reconfigurable to $\mathbb{B}'$.
Note that the length of the above reconfiguration sequence is $O(|E|^2)$.
Therefore, \Cref{thm:solvability} follows.

\section{Inapproximability of finding a shortest reconfiguration sequence}
In this section, we prove \Cref{thm:hardness}, that is, \prb{Shortest \ourprob} is hard to approximate in polynomial time under $\P \neq \NP$.
To show this inapproximability result, we perform a reduction from \prb{Set Cover}, which is notoriously hard to approximate.

Let $\mathcal{S} \subseteq 2^U$ be a family of subsets of a finite set $U$ where $n=|U|$ and $m=|\mathcal{S}|$. 
We say that a subfamily $\mathcal S' \subseteq \mathcal S$ \emph{covers} $U$ (or $\mathcal S'$ is a \emph{set cover} of $U$) if $U = \bigcup_{S \in \mathcal S'}S$.
\prb{Set Cover} is the problem that, given a set $U$ and a family $\mathcal S \subseteq 2^U$ of subsets of $U$, asks to find a minimum cardinality subfamily $\mathcal S' \subseteq \mathcal S$ that covers $U$.
\prb{Set Cover} is known to be hard to approximate: Raz and Safra~\cite{RazS97:sub-constant} showed that there is a constant $c^* > 0$ such that it is $\NP$-hard to find a $c^*\log (n+m)$-approximate solution of \prb{Set Cover}.
Throughout this section, we assume that the whole family $\mathcal S$ covers $U$.

From an instance $(U, \mathcal S)$ of \prb{Set Cover}, we construct two partition matroids $M_1 = (E_1, \mathcal B_1)$ and $M_2 = (E_2, \mathcal B_2)$ such that there is a set cover of $U$ of size at most $k$ if and only if there is a reconfiguration sequence between feasible basis sequences $\source{\mathbb{B}}$ and $\sink{\mathbb{B}}$ of $\mathbb M = (M_1, M_2)$ with length at most $\ell$ for some $\ell$. 

\subsection{Construction} \label{subsec:construction}
To construct the partition matroids $M_1$ and $M_2$, we use several uniform matroids and combine them into $M_1$ and $M_2$.
In the following, we assume that the sets in $\mathcal S$ are ordered in an arbitrary total order $\preceq$.
For each element $u \in U$, we define $f(u) + 3$ elements $\elmver{u}{1}, \elmver{u}{2}, \elmver{u}{3}, \cntver{u}{1}, \dots, \cntver{u}{f(u)}$ and three sets:
\begin{align*}
    \elmgrd{u}{1} \coloneqq \{\elmver{u}{1}, \elmver{u}{2}\},\qquad
    \elmgrd{u}{2} \coloneqq \{\elmver{u}{1}, \elmver{u}{2}, \elmver{u}{3}\},\qquad
    \elmgrd{u}{3} \coloneqq \{\elmver{u}{3}\}\cup \{ \cntver{u}{1}, \cntver{u}{2}, \ldots, \cntver{u}{f(u)}\},
\end{align*}
where $f(u) \coloneqq |\{S \in \mathcal{S} \mid u \in S\}|$ is the number of occurrences of $u$ in $\mathcal S$.
We denote by $\elmmat{u}{i}$ the rank-$1$ uniform matroid over $\elmgrd{u}{i}$ for $1 \le i \le 3$, that is, each basis of $\elmmat{u}{i}$ contains exactly one element in $\elmgrd{u}{i}$.
For each set $S \in \mathcal S$, we denote by $\setmat{S}{0}$ the uniform matroid of rank~$|S|$ with ground set $\setgrd{S}{0} \coloneqq \{\cntver{u}{\elemid{u}{S}} \mid u \in S\} \cup \{\setver{S}{1}\}$, where $\elemid{u}{S} = |\{S' \mid  S' \preceq S, u \in S'\}|$.
Note that $\setgrd{S}{0} \cap \setgrd{S'}{0} = \emptyset$ for distinct $S, S' \in \mathcal S$.
We let $L \coloneqq 2n^2$.
For $1 \le i \le L$, we denote by $\setmat{S}{i}$ the rank-$1$ matroid with ground set $\setgrd{S}{i} \coloneqq \{\setver{S}{i}, \setver{S}{i+1}\}$.
Then, we define two partition matroids $M_1$ and $M_2$ as:
\begin{align*}
    \rev{M_{1} \coloneqq \bigoplus_{u \in U} \elmmat{u}{1} \oplus \bigoplus_{u \in U} \elmmat{u}{3} \oplus \bigoplus_{S \in \mathcal{S}}\bigoplus_{i = 1}^{n^2} \setmat{S}{2i-1}, \qquad
    M_{2} \coloneqq \bigoplus_{u \in U} \elmmat{u}{2} \oplus \bigoplus_{S \in \mathcal{S}} \setmat{S}{0} \oplus \bigoplus_{S \in \mathcal{S}}\bigoplus_{i = 1}^{n^2} \setmat{S}{2i}.}
\end{align*}
The matroids $M_1$ and $M_2$ are illustrated in \Cref{fig:partition-matroids}.
\begin{figure}
    \centering
    \includegraphics[width=0.8\textwidth]{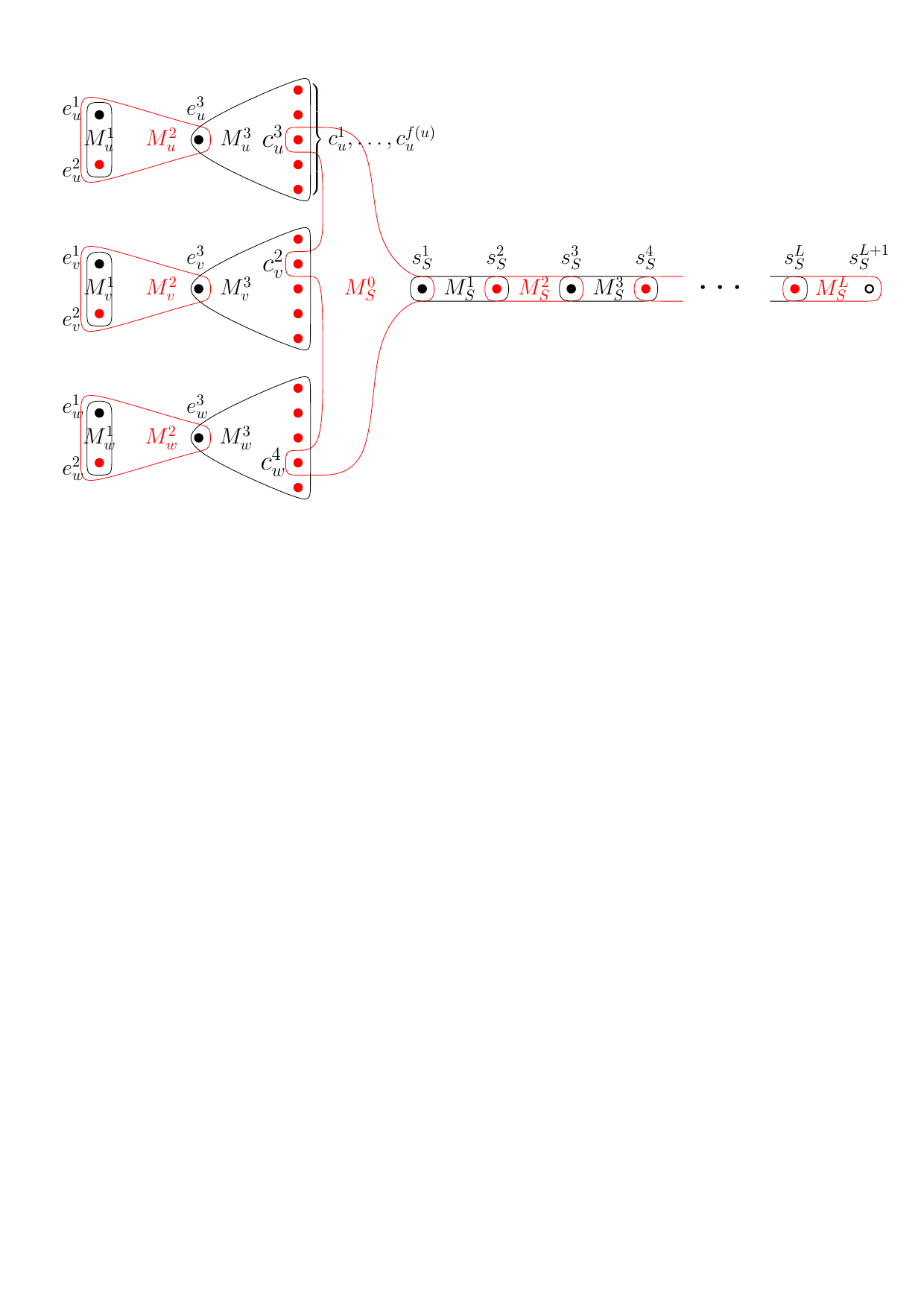}
    \caption{The figure depicts (hypergraph representations of) two partition matroids $M_1$ and $M_2$. A set $S \in \mathcal S$ contains three elements $u, v, w \in U$ with $\elemid{u}{S} = 3$, $\elemid{v}{S}= 2$, and $\elemid{w}{S} = 4$.
    Solid black circles represent elements in $\source{B}_1$, and solid red circles represent elements in $\source{B}_2$.
    }
    \label{fig:partition-matroids}
\end{figure}
We denote by $E_1$ and $E_2$ the ground sets and by $\mathcal B_1$ and $\mathcal B_2$ the collections of bases of $M_1$ and $M_2$, respectively.
Each uniform matroid constituting these partition matroids is called a \emph{block}.
Since $M_1$ and $M_2$ are partition matroids, the following observation follows.
\begin{observation}\label{obs:partition-matroid}
    Let $(B_1, B_2)$ be a feasible basis sequence of $(M_1, M_2)$ and let $x \in B_1$ be arbitrary.
    Then, for any element $y \in E_1 \setminus (B_1 \cup B_2)$ that belongs to the same block as $x$ in $M_1$, $(B_1 - x + y, B_2)$ is a feasible basis sequence of $(M_1, M_2)$.
    Similarly, let $x \in B_2$ be arbitrary.
    Then, for any element $y \in E_2 \setminus (B_1 \cup B_2)$ that belongs to the same block as $x$ in $M_2$, $(B_1, B_2  - x + y)$ is a feasible basis sequence of $(M_1, M_2)$.
\end{observation}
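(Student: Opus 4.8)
The plan is to verify the two claimed properties directly from the fact that $M_1$ and $M_2$ are partition matroids, i.e. direct sums of the uniform matroids we called blocks; no new idea is needed, only careful bookkeeping with the definition of feasibility. First I would recall that, writing $M_1$ as the direct sum of its blocks, a set $B \subseteq E_1$ is a basis of $M_1$ if and only if $|B \cap E'| = r'$ for every block with ground set $E'$ and rank $r'$. Now fix a feasible basis sequence $(B_1, B_2)$ and an element $x \in B_1$, let $E'$ be the ground set of the block of $M_1$ containing $x$, and let $y \in E_1 \setminus (B_1 \cup B_2)$ belong to that same block, so $y \in E' \setminus (B_1 \cup B_2)$. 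Then $B_1 - x + y$ coincides with $B_1$ on every block other than the one with ground set $E'$, while on $E'$ it equals $(B_1 \cap E') - x + y$, which is again a subset of $E'$ of the same cardinality $r'$ as $B_1 \cap E'$; hence $B_1 - x + y \in \mathcal B_1$.

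Next I would check that $(B_1 - x + y, B_2)$ is feasible, i.e. $(B_1 - x + y) \cap B_2 = \emptyset$. Since $(B_1, B_2)$ is feasible we have $B_1 \cap B_2 = \emptyset$, so $(B_1 \setminus \{x\}) \cap B_2 = \emptyset$; and $y \notin B_2$ because $y \in E_1 \setminus (B_1 \cup B_2)$. Therefore $(B_1 - x + y) \cap B_2 = \bigl((B_1 \setminus \{x\}) \cup \{y\}\bigr) \cap B_2 = \emptyset$, so $(B_1 - x + y, B_2)$ is a feasible basis sequence of $(M_1, M_2)$. The second assertion, for $x \in B_2$ and $y \in E_2 \setminus (B_1 \cup B_2)$ in the same block of $M_2$, follows by the symmetric argument with the roles of the two coordinates interchanged.

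I do not expect any genuine obstacle here; the only two points that require a moment of care are: (i) every block of $M_1$ and of $M_2$ is a uniform matroid, so exchanging an element of a basis of a block for any other element of the same block again yields a basis of that block, regardless of the block's rank — this matters for the rank-$|S|$ blocks $\setmat{S}{0}$ of $M_2$; and (ii) the incoming element $y$ is required to avoid $B_1 \cup B_2$, and it is precisely the condition $y \notin B_2$ (respectively $y \notin B_1$) that preserves the disjointness of the two coordinates after the swap, which is nontrivial only because $E_1$ and $E_2$ may overlap.
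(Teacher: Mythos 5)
Your proof is correct and matches the intended reasoning. The paper states this observation without proof, treating it as immediate from the definition of partition matroids as direct sums of uniform blocks; your verification --- that a basis is exactly a set meeting each block in the prescribed cardinality, hence closed under single-element swaps within a block, and that choosing $y \notin B_1 \cup B_2$ preserves the disjointness of the two coordinates --- is precisely the argument the authors leave implicit.
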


Let $\source{\mathbb{B}} = (\source{B}_1, \source{B}_2)$ be a feasible basis sequence such that
\begin{align*}
    \source{B}_1 &= \{\elmver{u}{1} \mid u\in U\} \cup \{\elmver{u}{3} \mid u \in U\} \cup \bigcup_{S \in \mathcal S}\{\setver{S}{2i-1} \mid i \in [\Llength]\},\\
    \source{B}_2 &= \{\elmver{u}{2} \mid u\in U\} \cup \bigcup_{S \in \mathcal S}\{\cntver{u}{\elemid{u}{S}} \mid u \in S\} \cup \bigcup_{S \in \mathcal S}\{\setver{S}{2i} \mid i \in [\Llength]\}.
\end{align*}
It is easy to verify that $\source{B}_1$ and $\source{B}_2$ are bases of $M_1$ and $M_2$, respectively.
Similarly, let $\sink{\mathbb{B}} = (\sink{B}_1, \sink{B}_2)$ be a feasible basis sequence such that
\begin{align*}
    \sink{B}_1 &= \{\elmver{u}{2} \mid u\in U\} \cup \{\elmver{u}{3} \mid u \in U\} \cup \bigcup_{S \in \mathcal S}\{\setver{S}{2i-1} \mid i \in [\Llength]\},\\
    \sink{B}_2 &= \{\elmver{u}{1} \mid u\in U\} \cup \bigcup_{S \in \mathcal S}\{\cntver{u}{\elemid{u}{S}} \mid u \in S\} \cup \bigcup_{S \in \mathcal S}\{\setver{S}{2i} \mid i \in [\Llength]\}.
\end{align*}
Let us note that $\setver{S}{L + 1} \notin \source{B}_1 \cup \source{B}_2 \cup \sink{B}_1 \cup \sink{B}_2$ for all $S \in \mathcal S$.
Moreover, we have $\source{B}_1 \setminus \sink{B}_1 = \sink{B}_2 \setminus \source{B}_2 = \{\elmver{u}{1} \mid u \in U\}$ and $\sink{B}_1 \setminus \source{B}_1 = \source{B}_2 \setminus \sink{B}_2 = \{\elmver{u}{2} \mid u \in U\}$.

\subsection{Correctness}
Before proceeding to our proof, we first give the intuition behind our construction.
Suppose that there are tokens on the elements in $\source{B}_1 \cup \source{B}_2$.
As observed in the previous subsection, we have $\elmver{u}{1} \in \source{B}_1 \setminus \sink{B}_1$ and $\elmver{u}{1} \in \sink{B}_2\setminus \source{B}_2$ for $u \in U$.
Moreover, $\elmver{u}{2} \in \sink{B}_1 \setminus \source{B}_1$ and $\elmver{u}{2} \in \source{B}_2\setminus \sink{B}_2$ for $u \in U$.
Thus, in order to transform $\source{\mathbb{B}}$ to $\sink{\mathbb{B}}$, we need to ``swap'' the tokens on $\elmver{u}{1}$ and $\elmver{u}{2}$.
However, as all the elements except for $\setver{S}{L+1}$ for $S \in \mathcal S$ are occupied by tokens in $\source{B}_1 \cup \source{B}_2$, this requires to move an ``empty space'' initially placed on $\setver{S}{L+1}$ to $\elmver{u}{3}$ for some $S \in \mathcal S$ with $u \in S$, and then swap the tokens on $\elmver{u}{1}$ and $\elmver{u}{2}$ using the empty space on $\elmver{u}{3}$.
By the construction of $M_1$ and $M_2$, this can be done by (1) shifting the tokens along the path between $\setver{S}{L+1}$ and $\setver{S}{1}$ one by one, (2) moving the empty space from $\setver{S}{1}$ to $\cntver{u}{\elemid{u}{S}}$, and then (3) moving the empty space from $\cntver{u}{\elemid{u}{S}}$ to $\elmver{u}{3}$, which requires at least $L$ exchanges.
As $L$ is sufficiently large, we need to cover the elements in $U$ with a small number of sets in $\mathcal S$ for a short reconfiguration sequence.
The following lemma gives an upper bound on the length of a shortest reconfiguration sequence.

\begin{lemma}\label{lem:exists_reconf}
    Let $\mathcal S^* \subseteq \mathcal S$ be a set cover of $U$ of size at most $k$.
    Then, there is a reconfiguration sequence between $\source{\mathbb{B}}$ and $\sink{\mathbb{B}}$ with length at most $2kL+ 7n$.
\end{lemma}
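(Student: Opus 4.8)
The plan is to give an explicit reconfiguration sequence from $\source{\mathbb B}$ to $\sink{\mathbb B}$, processing the elements of $U$ one by one, and re-using a single ``empty space'' to perform all the required swaps. Fix a set cover $\mathcal S^* = \{S_1, \dots, S_{k'}\}$ of $U$ with $k' \le k$, and for each $u \in U$ pick one $S(u) \in \mathcal S^*$ with $u \in S(u)$; group the elements of $U$ according to which $S_j$ was chosen, so that $U = \bigsqcup_{j=1}^{k'} U_j$ with $U_j = \{u : S(u) = S_j\}$. I will handle the groups $U_1, \dots, U_{k'}$ in turn; for each group we pay roughly $2L$ exchanges (to bring an empty space in along the $\setver{S_j}{\cdot}$-path and push it back out again), plus $O(1)$ exchanges per element $u \in U_j$ for the actual swap of $\elmver{u}{1}$ and $\elmver{u}{2}$. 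Summing gives at most $2k'L + O(n) \le 2kL + 7n$ once the per-element constant is pinned down to the claimed value; verifying that the constant is at most $7$ will be the only real bookkeeping.

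In detail, for a fixed $S_j = S \in \mathcal S^*$: first, using \Cref{obs:partition-matroid} on the length-$L$ path of rank-$1$ blocks $\setmat{S}{1}, \dots, \setmat{S}{L}$ (alternately in $M_1$ and $M_2$), shift tokens one step toward $\setver{S}{L+1}$: replace $\setver{S}{L}$ by $\setver{S}{L+1}$, then $\setver{S}{L-1}$ by $\setver{S}{L}$, and so on, so that after $L$ single exchanges the element $\setver{S}{1}$ carries no token and all of $\setver{S}{2}, \dots, \setver{S}{L+1}$ do. Next move this empty space off the path: in $\setmat{S}{0}$ (which has rank $|S|$ and ground set $\{\cntver{u}{\elemid{u}{S}} : u \in S\} \cup \{\setver{S}{1}\}$), we may drop any one $\cntver{u}{\elemid{u}{S}}$ and pick up $\setver{S}{1}$; doing this for $u \in U_j$ in turn (one exchange each) frees up exactly the elements $\cntver{u}{\elemid{u}{S}}$ for $u \in U_j$. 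For each such $u$, the block $\elmmat{u}{3}$ (rank-$1$, ground set $\{\elmver{u}{3}\} \cup \{\cntver{u}{1}, \dots, \cntver{u}{f(u)}\}$ in $M_1$) currently holds its token on $\elmver{u}{3}$; one exchange moves it to $\cntver{u}{\elemid{u}{S}}$, emptying $\elmver{u}{3}$. Now perform the swap: with $\elmver{u}{3}$ empty, use $\elmmat{u}{1} = (\{\elmver{u}{1},\elmver{u}{2}\}$, rank $1)$ in $M_1$ to move the token from $\elmver{u}{1}$ to $\elmver{u}{2}$ is blocked because $\elmver{u}{2} \in \source{B}_2$; instead route through $\elmmat{u}{2} = (\{\elmver{u}{1},\elmver{u}{2},\elmver{u}{3}\}$, rank $1)$ in $M_2$, moving its token from $\elmver{u}{2}$ to $\elmver{u}{3}$, then move the $M_1$-token from $\elmver{u}{1}$ to $\elmver{u}{2}$, then move the $M_2$-token from $\elmver{u}{3}$ to $\elmver{u}{1}$ — three exchanges, after which $\elmver{u}{1} \in B_2$, $\elmver{u}{2} \in B_1$ as required for $\sink{\mathbb B}$, and $\elmver{u}{3}$ is empty again. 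Then undo the $\elmmat{u}{3}$-move (one exchange back to $\elmver{u}{3}$), restore $\setmat{S}{0}$ by returning the token to $\cntver{u}{\elemid{u}{S}}$ (one exchange), and finally push the empty space back along the path from $\setver{S}{1}$ to $\setver{S}{L+1}$ ($L$ exchanges), restoring every $\setver{S}{i}$ to its original state in $\source{\mathbb B}$. At every step the current configuration is a feasible basis sequence of $(M_1, M_2)$: each single exchange is justified by \Cref{obs:partition-matroid} applied to the relevant block, since the destination element always lies in the same block and is unoccupied at that moment.

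Counting: the path in-and-out costs $2L$ per set $S_j$, giving $2k'L \le 2kL$ total; per element $u$ we used at most $1$ (free $\cntver{u}{\elemid{u}{S}}$ in $\setmat{S}{0}$) $+ 1$ (empty $\elmver{u}{3}$) $+ 3$ (the swap) $+ 1$ (refill $\elmver{u}{3}$) $+ 1$ (refill $\setmat{S}{0}$) $= 7$ exchanges, for a total of $7n$. Hence the reconfiguration sequence has length at most $2kL + 7n$. The one point requiring care is that the empty space genuinely returns to $\setver{S}{L+1}$ after each group and that the blocks $\setmat{S}{0}$, $\elmmat{u}{3}$ are fully restored, so that groups do not interfere; this is immediate because the operations for group $j$ only touch the blocks indexed by $S_j$ and by $u \in U_j$, all of which are returned to their $\source{\mathbb B}$-state except for the $\elmmat{u}{1}, \elmmat{u}{2}$ tokens, which move monotonically toward their $\sink{\mathbb B}$-positions. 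I expect the main (though still routine) obstacle to be verifying feasibility is preserved throughout — in particular that $\setver{S}{L+1} \notin \source{B}_1 \cup \source{B}_2$ gives the one empty slot needed to start, and that no two tokens ever collide across the $M_1$/$M_2$ boundary during the three-exchange swap around $\elmver{u}{3}$.
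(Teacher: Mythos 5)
Your proof is correct and takes essentially the same approach as the paper: shift an empty slot along the $\setver{S}{\cdot}$-path ($L$ exchanges in, $L$ out) for each $S$ in the cover, and for each $u$ route the empty slot via $\setmat{S}{0}$ and $\elmmat{u}{3}$ to free $\elmver{u}{3}$, then perform the three-step swap of $\elmver{u}{1}$ and $\elmver{u}{2}$ and undo the routing (exactly $7$ exchanges per $u$). The only cosmetic difference is scheduling: the paper does all $k$ shift-ins first, then all $n$ swaps, then all $k$ shift-outs, while you handle each $S_j$'s shift-in/swaps/shift-out as a contiguous block; both orderings give the same total $2kL + 7n$. One phrasing to tighten: ``doing this for $u \in U_j$ in turn \dots\ frees up exactly the elements $\cntver{u}{\elemid{u}{S}}$ for $u \in U_j$'' reads as if all of them could be free simultaneously, which cannot happen since $\setmat{S}{0}$ has rank $|S|$ on a ground set of size $|S|+1$; your later per-element count of $7$ (which includes the refill of $\setmat{S}{0}$ before moving to the next $u$) makes clear you mean one at a time, but the sentence should say so explicitly.
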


\begin{proof}
    Given a set cover $\mathcal S^* \subseteq \mathcal S$, we construct a reconfiguration sequence between $\source{\mathbb{B}}$ and $\sink{\mathbb{B}}$ by applying the algorithm described in Algorithm~\ref{alg:constructing-reconfiguration-sequence}.
    Let $\mathbb B = (B_1, B_2)$ be a feasible basis sequence of $(M_1, M_2)$.
    For $x, y \in E_1 \cup E_2$, we call $(x, y)$ a \emph{valid pair} if either
    \begin{itemize}\setlength{\leftskip}{0.3cm}
        \item[(1)] $x \in B_1$ and $y \in E_1 \setminus (B_1  \cup B_2)$ belong to the same block in $M_1$; or
        \item[(2)] $x \in B_2$ and $y \in E_2 \setminus (B_1 \cup B_2)$ belong to the same block in $M_2$.
    \end{itemize}
    For a valid pair $(x, y)$, we define
    \begin{align*}
        \mathbb B \symdif (x, y) = \begin{cases}
            (B_1 - x + y, B_2) & \text{if } (x, y) \text{ satisfies (1)},\\
            (B_1, B_2 - x + y) & \text{if } (x, y) \text{ satisfies (2)}.
        \end{cases}
    \end{align*}
    By~\Cref{obs:partition-matroid}, $\mathbb B \symdif (x, y)$ is a feasible basis sequence of $(M_1, M_2)$.
    \begin{algorithm}[t]
        \KwInput{A set cover $\mathcal S^* \subseteq \mathcal S$ of $U$.} 
        \KwOutput{A reconfiguration sequence between $\source{\mathbb{B}}$ and $\sink{\mathbb{B}}$.}
        $\Tilde{U} \gets \emptyset$, $\mathbb{B} \gets \source{\mathbb{B}}$\\
        \ForEach{$S \in \mathcal{S}^*$}{
            \For{$i = L, L - 1, \ldots, 1$}{
                $\mathbb{B} \gets \mathbb{B} \symdif (\setver{S}{i}, \setver{S}{i+1})$
            }
        }
        \ForEach{$S \in \mathcal{S}^*$}{
            \If{$S \setminus \Tilde{U} \neq \emptyset$}{
                \ForEach{$u \in S \setminus \Tilde{U}$}{
                    $\mathbb{B} \gets \mathbb{B} \symdif (\cntver{u}{\elemid{u}{S}}, \setver{S}{1})$\\ \label{alg:line:st}
                    $\mathbb{B} \gets \mathbb{B} \symdif (\elmver{u}{3}, \cntver{u}{\elemid{u}{S}})$\\
                    $\mathbb{B} \gets \mathbb{B} \symdif (\elmver{u}{2}, \elmver{u}{3})$\\
                    $\mathbb{B} \gets \mathbb{B} \symdif (\elmver{u}{1}, \elmver{u}{2})$\\
                    $\mathbb{B} \gets \mathbb{B} \symdif (\elmver{u}{3}, \elmver{u}{1})$\\
                    $\mathbb{B} \gets \mathbb{B} \symdif (\cntver{u}{\elemid{u}{S}}, \elmver{u}{3})$\\
                    $\mathbb{B} \gets \mathbb{B} \symdif (\setver{S}{1}, \cntver{u}{\elemid{u}{S}})$ \label{alg:line:ed}
                }
                $\Tilde{U} \gets \Tilde{U} \cup S$
            }
        }
        \ForEach{$S \in \mathcal{S}^*$}{
            \For{$i = 1, 2, \ldots, L$}{
                $\mathbb{B} \gets \mathbb{B} \symdif (\setver{S}{i+1}, \setver{S}{i})$
            }
        }
        \caption{An algorithm for constructing a reconfiguration sequence between $\source{\mathbb{B}}$ and $\sink{\mathbb{B}}$ from a set cover $\mathcal S^* \subseteq \mathcal S$ of $U$.}\label{alg:constructing-reconfiguration-sequence}
    \end{algorithm}

    When we update a feasible basis sequence $\mathbb B = (B_1, B_2)$ with $\mathbb B \symdif (x, y)$ for some $x, y \in E_1 \cup E_2$ in the algorithm, the pair $(x, y)$ is always assured to be valid.
    Thus, all the pairs $\mathbb B = (B_1, B_2)$ appearing in the execution of the algorithm are feasible basis sequences of $(M_1, M_2)$.
    Since $\mathcal S^*$ is a set cover of $U$, we have $\tilde U = U$ when the algorithm terminates.
    Thus, for each $u \in U$, the steps from line~\ref{alg:line:st} to line~\ref{alg:line:ed} are executed exactly once.
    This implies that the algorithm correctly computes a reconfiguration sequence between $\source{\mathbb{B}}$ and $\sink{\mathbb{B}}$ with length $2kL + 7n$.
\end{proof}

\begin{lemma} \label{lem:exists_cover}
    Suppose that there is a reconfiguration sequence between $\source{\mathbb{B}}$ and $\sink{\mathbb{B}}$ of length $\ell$.
    Then, there is a set cover $\mathcal S^* \subseteq \mathcal S$ of $U$ with $|\mathcal{S}^*| \le \floor{\ell/2L}$.
\end{lemma}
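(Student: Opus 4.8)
The plan is to analyze a reconfiguration sequence $\langle \mathbb B_0, \dots, \mathbb B_\ell \rangle$ with $\mathbb B_0 = \source{\mathbb B}$ and $\mathbb B_\ell = \sink{\mathbb B}$, and extract from it a set cover of size at most $\floor{\ell/2L}$. The key observation is that, since $\elmver{u}{1} \in \source B_1 \setminus \sink B_1$ and $\elmver{u}{2} \in \sink B_1 \setminus \source B_1$ for every $u \in U$, the token configuration must change on the block $\elmmat{u}{1}$ (and on $\elmmat{u}{2}$) at least once; more precisely, at some step the element $\elmver{u}{3}$ must be vacated so that the tokens on $\elmver{u}{1}$ and $\elmver{u}{2}$ can be exchanged through it. I would first establish the structural fact that, in every feasible basis sequence reachable from $\source{\mathbb B}$, every block contains exactly one ``hole'' among its ground-set elements except that all the ``chain'' gadgets $\setmat{S}{i}$ and the $\setmat{S}{0}$ gadgets for a set $S$ behave like a single long path $\setver{S}{1}, \dots, \setver{S}{L+1}$ plus the elements $\cntver{u}{\elemid uS}$ hanging off it; the only initially empty element is $\setver{S}{L+1}$ for each $S$. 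So to create a hole at $\elmver{u}{3}$, the hole originally at some $\setver{S}{L+1}$ with $u \in S$ must be ``walked'' all the way down the chain of $\setmat{S}{i}$'s, through $\setmat{S}{0}$, to the element $\cntver{u}{\elemid uS}$, and then into $\elmgrd{u}{3}$. This costs at least $L$ exchanges just to move a hole from $\setver{S}{1}$ to $\setver{S}{L+1}$, because each $\setmat{S}{i}$ is a separate rank-$1$ block and a single exchange moves a hole across at most one block.

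Concretely, I would define, for each $S \in \mathcal S$, a potential $\phi_S(\mathbb B_t)$ measuring ``how far the hole(s) associated to the $S$-chain have travelled,'' e.g. the number of indices $i \in [L]$ such that $\setver{S}{i} \notin B_1 \cup B_2$ (equivalently, the number of chain-blocks of $S$ currently holding their hole on the lower-index side), and argue that a single exchange changes $\sum_S \phi_S$ by at most $1$, and moreover only changes $\phi_S$ for the unique $S$ whose block is touched. Then I would say: call $S$ \emph{used} if at some time step $t$ an exchange is performed inside one of the chain blocks $\setmat{S}{i}$ (equivalently $\phi_S$ ever changes). For each $u \in U$, the block $\elmgrd{u}{3}$ must at some point lose its token (to allow the $\elmver u1 \leftrightarrow \elmver u2$ swap), and the only way an element of $\elmgrd u3$ becomes empty is by an exchange with $\cntver{u}{\elemid uS}$ for some $S \ni u$, which in turn must previously have been emptied via $\setver S1$, forcing $\phi_S$ to have increased to $L$ and back at least once — hence $S$ is used, and moreover the chain of $S$ must be fully traversed (hole from $\setver S1$ out to $\setver S{L+1}$ and back, or at least $\ge L$ one-step moves on $S$-blocks) contributing at least $2L$ to $\ell$ counted against $S$. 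Collecting all used sets gives a subfamily $\mathcal S^*$ covering $U$, and since the $2L$ exchanges charged to distinct used sets are disjoint (they live in disjoint blocks), we get $\ell \ge 2L\,|\mathcal S^*|$, i.e. $|\mathcal S^*| \le \floor{\ell/2L}$.

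I expect the main obstacle to be the careful bookkeeping that the exchanges charged to one used set $S$ are genuinely disjoint from those charged to another, and that they really number at least $2L$ per used set rather than $L$: one must argue not only that the hole reaches $\elmver u3$ (costing $\ge L$ going ``in'') but also that the configuration on the $S$-chain and on $\setmat{S}{0}$ is restored by the end (since $\sink B_1, \sink B_2$ agree with $\source B_1, \source B_2$ on all $\setver S{i}$ and all $\cntver u{\elemid uS}$), costing another $\ge L$ ``coming back out.'' This requires showing that $\phi_S$ starts and ends at its minimum value but must reach its maximum value $L$ in between whenever $S$ is used to uncover some $\elmver u3$, so the total variation of $\phi_S$ along the sequence is at least $2L$, and total variation is additive over the disjoint sets of time steps touching each $S$. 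A minor technical point to handle is the case where a single set $S$ is used to uncover $\elmver u3$ for several different $u \in S$ in one ``trip''; this only helps (fewer sets needed) and does not affect the lower bound $\ell \ge 2L\,|\mathcal S^*|$, but I would note it explicitly. Finally I would observe $\mathcal S^*$ covers $U$ because every $u$ forced at least one $S \ni u$ into $\mathcal S^*$.
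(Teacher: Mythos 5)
Your overall strategy is the same as the paper's: define $\mathcal S^*$ as the family of sets whose ``chain'' is fully traversed, show it covers $U$ (because freeing $\elmver{u}{3}$ forces some $S \ni u$ to push its hole all the way to $\setver{S}{1}$), and charge $2L$ exchanges to each member of $\mathcal S^*$, noting these charges are disjoint across sets. The paper defines $\mathcal S^* \coloneqq \{S : \setver{S}{1} \text{ is free in some } \mathbb B_i\}$, proves the cover property by contrapositive (if no $S \ni u^*$ ever frees $\setver{S}{1}$, then $\elmver{u^*}{3}$ is locked in $B_1$ throughout, so $\elmver{u^*}{1}$ and $\elmver{u^*}{2}$ can never swap), and then observes that for each such $S$ the hole must travel from $\setver{S}{L+1}$ to $\setver{S}{1}$ and back, accounting for $\ge 2L$ exchanges inside $S$'s blocks.

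Two details in your write-up need repair, though neither undermines the approach.  First, the two expressions you offer as ``equivalent'' definitions of $\phi_S$ are not equivalent.  Since the $L$ chain blocks place $L$ tokens on the $L+1$ elements $\setver{S}{1}, \ldots, \setver{S}{L+1}$, at most one of these is ever free; so ``the number of $i \in [L]$ with $\setver{S}{i}$ free'' is always $0$ or $1$ and can never reach $L$.  The second phrasing, ``the number of chain blocks whose basis sits on the lower-index side,'' is the one that actually works: it starts at $L$, drops to $0$ exactly when $\setver{S}{1}$ is free, and changes by $\pm 1$ per exchange, which is what the total-variation argument requires.  Second, you define a set $S$ as ``used'' whenever any exchange touches a chain block of $S$, and then let $\mathcal S^*$ be ``all used sets.''  That is too broad: a set whose hole moves one step in and one step back is ``used'' but only incurs $2$ exchanges, not $2L$, so $\ell \ge 2L\,|\mathcal S^*|$ would fail.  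You should instead take $\mathcal S^*$ to be exactly the sets whose chain is fully traversed (equivalently, whose $\setver{S}{1}$ becomes free at some time step); your own cover argument already establishes that this smaller family still covers $U$, and the $2L$-per-set charge then goes through.
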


\begin{proof}
    Let $\sigma = \rseq{\mathbb B_0, \dots, \mathbb B_\ell}$ be a reconfiguration sequence between $\source{\mathbb{B}}$ and $\sink{\mathbb{B}}$ of length $\ell$.
    For a feasible basis sequence $\mathbb B = (B_1, B_2)$, an element $e \in E_1 \cup E_2$ is said to be \emph{free} in $\mathbb B$ if $e \notin B_1 \cup B_2$.
    We define $\mathcal{S}^* \coloneqq \{S \in \mathcal S \mid \setver{S}{1} \text{ is free in } \mathbb B_i \text{ for some } i\}$.
    Then the following holds.
    \begin{claim}\label{cl:set_cover}
    The subfamily $\mathcal S^*$ of $\mathcal{S}$ is a set cover of $U$.
    \end{claim}
    \begin{proof}[Proof of \Cref{cl:set_cover}]
    Let $\mathbb{B}_i = (B_1^i, B_2^i)$ for $i \in [0, \ell]$.
    We first observe that, for $S \in \mathcal{S}$ and $i \in [0, \ell]$,
    if $s_S^1\in B^i_{1}$, then $\{ c_u^{\elemid{u}{S}} \mid u \in S \} \subseteq B^i_{2}$.
    This can be seen as follows.
    Since $s_S^1\in B^i_{1}$,
    we have $s_S^1 \notin B^i_{2}$.
    As $B^i_2$ must contain a basis $B_S^0$ of $M_S^0$,
    which is the uniform matroid of rank $|S|$ with the ground set $\{ c_u^{\elemid{u}{S}} \mid u \in S \} \cup \{s_S^1\}$,
    the basis $B_S^0$ must be $\{ c_u^{\elemid{u}{S}} \mid u \in S \}$.
    That is, we have $\{ c_u^{\elemid{u}{S}} \mid u \in S \} \subseteq B^i_2$.

    We then show the assertion of \Cref{cl:set_cover}.
    Suppose for contradiction that there is an element $u^* \in U$ that is not covered by $\mathcal S^*$.
    Then, for $S \in \mathcal S$ with $u^* \in S$, the element $\setver{S}{1}$ is not free in $\mathbb B_i$ for any $0 \le i \le \ell$,
    which implies that $\setver{S}{1}$ belongs to $B_1^i$.
    Thus, for each $i$, we have
    $B^i_2 \supseteq \bigcup_{S \in \mathcal S : u^{*} \in S}\{ c_u^{\elemid{u}{S}} \mid u \in S \} \supseteq \{ c_{u^*}^1, \dots, c_{u^*}^{f(u^*)} \}$,
    where the first inclusion follows from the above observation.
    By this inclusion with the fact that $M_{u^*}^3$ is the uniform matroid of rank $1$ with the ground set $\{e_{u^*}^3\} \cup \{ c_{u^*}^1, \dots, c_{u^*}^{f(u^*)} \}$,
    the basis $B_1^i$ must contain $e_{u^*}^3$ for each $i$.
    Hence, during the reconfiguration sequence $\sigma = \rseq{\mathbb B_0, \dots, \mathbb B_\ell}$,
    we cannot move any element in $E_{u^*}^1 = \{ e_{u^*}^1, e_{u^*}^2\}$ (or more precisely $E_{u^*}^1 \cup E_{u^*}^2 \cup E_{u^*}^3$).
    This contradicts that $\sigma$ is a reconfiguration sequence from $\source{\mathbb{B}}$ to $\sink{\mathbb{B}}$;
    recall $e_{u^*}^1 \in \source{B}_1 \setminus \sink{B}_1 = \sink{B}_2 \setminus \source{B}_2$ and $e_{u^*}^2 \in \sink{B}_1 \setminus \source{B}_1 = \source{B}_2 \setminus \sink{B}_2$.
    \end{proof}

In the reconfiguration sequence $\sigma = \rseq{\mathbb B_0, \dots, \mathbb B_\ell}$,
for each $S \in \mathcal{S}^*$,
the element $\setver{S}{L+1}$ must be free in $\mathbb B_0$ and $\mathbb B_\ell$,
and 
$\setver{S}{1}$ must be free at least once.
Hence, the length $\ell$ of $\sigma$ is at least $2L \cdot |\mathcal{S}^*|$,
where $L$ is equal to the number of required steps to move from a feasible basis sequence such that $\setver{S}{L+1}$ (resp. $\setver{S}{1}$) is free to another feasible basis sequence such that $\setver{S}{1}$ (resp. $\setver{S}{L+1}$) is free.
Since $\mathcal{S}^*$ is a set cover by \Cref{cl:set_cover},
we can conclude that there is a set cover of size at most $\floor{\ell/2L}$.
\end{proof}

\begin{proof}[Proof of \Cref{thm:hardness}]
    \rev{
    To prove the $\NP$-hardness, we give a polynomial-time reduction from \prb{Set Cover}.
    We claim that $I = (U, \mathcal{S})$ has a set cover of size at most $k$ if and only if there is a reconfiguration sequence between $\source{\mathbb{B}}$ and $\sink{\mathbb{B}}$ of length at most $(2k + 1) \cdot L$.
    We may assume $n \ge 4$.}
    
    \rev{
    Suppose that $I$ has a set cover of size at most $k$.
    Then, by \Cref{lem:exists_reconf} and $7n \le 2n^2 = L$, we can construct a reconfiguration sequence from $\source{\mathbb{B}}$ to $\sink{\mathbb{B}}$ of length at most $2kL + 7n \leq 2kL + 2n^2 = (2k + 1) \cdot L$, proving the forward implication.
    }
    
    \rev{
    Conversely, assume that there is a reconfiguration sequence between $\source{\mathbb{B}}$ and $\sink{\mathbb{B}}$ of length at most $(2k + 1) \cdot L$.
    Then, by \Cref{lem:exists_cover}, we obtain a set cover for $I$ of the size at most $\floor{(2k + 1) \cdot L/2L} = \floor{k + 1 / 2} = k$.
    }
    
    \rev{To prove the inapproximability, let $N = \sum_{i=1}^k |E_i|$ and} suppose that there exists a $c'\log N$-approximation algorithm $\mathcal{A}'$ for \prb{Shortest \ourprob} for some constant $c' > 0$. Then we construct an algorithm $\mathcal{A}$ that, given an instance $I = (U, \mathcal{S})$ of \prb{Set Cover}, outputs a set cover of $I$ as follows. 

    \begin{enumerate}
        \item Construct an instance $I' = (\mathbb M, \source{\mathbb{B}}, \sink{\mathbb{B}})$ of \prb{Shortest \ourprob} from an instance $I = (U, \mathcal S)$ of \prb{Set Cover} using the construction in \Cref{subsec:construction}.
        \item Compute a reconfiguration sequence $\sigma'$ of $I'$ by applying $\mathcal{A}'$.
        \item Compute a set cover $\mathcal{S}^*$ for $I$ from $\sigma'$ by \Cref{lem:exists_cover}.
    \end{enumerate}

    \begin{claim} \label{cl:approx_alg}
        For some constant $c > 0$, algorithm $\mathcal{A}$ produces a $c\log (n + m)$-approximation solution for \prb{Set Cover}.
    \end{claim}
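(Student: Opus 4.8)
The plan is to compose the two directions of the reduction while keeping track of the lower-order additive term and of the polynomial blow-up of the instance size. Let $k^{*}$ denote the minimum size of a set cover of $I$; it is well defined with $k^{*} \ge 1$ since $\mathcal S$ covers $U$ (the degenerate case $U = \emptyset$ is trivial). Let $\ell^{*}$ be the length of a shortest reconfiguration sequence between $\source{\mathbb{B}}$ and $\sink{\mathbb{B}}$ of $I'$; such a sequence exists by \Cref{lem:exists_reconf}. First I would bound $\ell^{*}$ from above by a constant multiple of $L k^{*}$, then push this bound through the approximation guarantee of $\mathcal A'$, then translate the sequence $\mathcal A'$ returns back into a set cover via \Cref{lem:exists_cover}, and finally compare $\log N$ with $\log(n+m)$.

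For the upper bound, \Cref{lem:exists_reconf} applied to an optimal set cover gives $\ell^{*} \le 2 k^{*} L + 7n$. Since we may assume $n \ge 4$ we have $7n \le 2n^{2} = L$, and together with $k^{*} \ge 1$ this yields $\ell^{*} \le (2k^{*}+1)L \le 3 k^{*} L$. As $I'$ admits a reconfiguration sequence, $\mathcal A'$ returns an actual reconfiguration sequence $\sigma'$, and since $\mathcal A'$ is a $c' \log N$-approximation, $\sigma'$ has length $\ell' \le c' \log N \cdot \ell^{*} \le 3 c' L\, k^{*} \log N$. Plugging $\sigma'$ into the construction of \Cref{lem:exists_cover}, step~3 of $\mathcal A$ produces a set cover $\mathcal S^{*} \subseteq \mathcal S$ of $U$ with $|\mathcal S^{*}| \le \floor{\ell'/2L} \le \ell'/(2L) \le \tfrac{3}{2} c'\, k^{*} \log N$.

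It remains to replace $\log N$ by $\log(n+m)$. From the construction in \Cref{subsec:construction}, $N = |E_{1}| + |E_{2}| = O(n^{2}m)$: the elements $\elmver{u}{i}$ and $\cntver{u}{j}$ contribute $O\left(\sum_{u \in U} f(u)\right) = O(nm)$, using $\sum_{u \in U} f(u) = \sum_{S \in \mathcal S} |S| \le nm$, while the blocks $\setgrd{S}{i}$ with $i \in [L]$ contribute $O(mL) = O(n^{2}m)$. Hence $N \le (n+m)^{O(1)}$, so $\log N \le c'' \log(n+m)$ for a suitable constant $c''$ once $n+m$ is at least a constant; the finitely many smaller instances are handled directly. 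Therefore $|\mathcal S^{*}| \le c \log(n+m)\cdot k^{*}$ with $c \coloneqq \tfrac{3}{2} c' c''$, that is, $\mathcal S^{*}$ is within a factor $c\log(n+m)$ of the optimum. Since step~1 runs in polynomial time ($N$ is polynomial in the size of $I$), $\mathcal A'$ is polynomial by hypothesis, and step~3 is polynomial, $\mathcal A$ is a polynomial-time $c\log(n+m)$-approximation algorithm for \prb{Set Cover}.

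I do not expect a genuine obstacle here; the argument is arithmetic bookkeeping around \Cref{lem:exists_reconf} and \Cref{lem:exists_cover}. The two points that need care are (i) absorbing the additive $7n$ into a constant multiple of $Lk^{*}$, which is precisely where $n \ge 4$ and $k^{*} \ge 1$ are used, and (ii) checking that $N$ is genuinely polynomial in $n+m$ so that $\log N = \Theta(\log(n+m))$; only then are the two approximation ratios comparable, which is what makes the reduction approximation-preserving and will, in combination with the Raz--Safra inapproximability of \prb{Set Cover} and a sufficiently small choice of $c'$, yield \Cref{thm:hardness}.
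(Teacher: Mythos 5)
Your proposal is correct and follows essentially the same route as the paper: bound the shortest reconfiguration length via \Cref{lem:exists_reconf}, push that through the $c'\log N$-approximation guarantee of $\mathcal A'$, convert back to a set cover via \Cref{lem:exists_cover}, and finally relate $\log N$ to $\log(n+m)$ using the polynomial bound on $N$. The only cosmetic difference is in absorbing the additive $7n$ term: you use $n \ge 4$ and $k^* \ge 1$ to get the clean bound $\ell^* \le 3k^*L$, whereas the paper writes $\ell/2L = c'\log N\,(\OPT(I)+o(1)) \le 2c'\log N\cdot\OPT(I)$; both are fine and yield the same conclusion up to the constant.
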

    \begin{proof}[Proof of \Cref{cl:approx_alg}]
        Since $\mathcal S$ covers $U$, by~\Cref{lem:exists_reconf}, there is a reconfiguration sequence between $\source{\mathbb{B}}$ and $\sink{\mathbb{B}})$ of length at most $2L\cdot \OPT(I) + 7n$, where $\OPT(I)$ is the minimum cardinality of a set cover of $U$.
    Moreover, we have $N \le (n + m)^{d}$ for some constant $d$.
    Thus, $\mathcal{A}'$ outputs a reconfiguration sequence $\sigma'$ of length at most $\ell := c'\log N \cdot (2L\cdot\OPT(I) + 7n)$ in time $(n + m)^{O(1)}$. 
    Finally, by \Cref{lem:exists_cover}, we can compute a set cover $\mathcal S^* \subseteq \mathcal S$ of $U$ from $\sigma'$ with size at most $\ell/2L = c'\log N \cdot (\OPT(I) + o(1)) \leq 2 c'\log N \cdot \OPT(I)$.
    Since $N \le (n + m)^{d}$, we have 
    $|\mathcal{A}(I)| \le c\log(n + m) \cdot \OPT(I)$
    for any constant $c > 2 c'd$.
    \end{proof}

    By choosing the constant $c'$ as $c' < c^*/2d$, we derive a polynomial-time $c^*\log (n+m)$-approximation algorithm for \prb{Set Cover}, completing the proof of \Cref{thm:hardness}.
\end{proof}

\section{Conclusion}
In this paper, we studied \prb{\ourprob}, which is a generalization of \prb{Spanning Tree Sequence Reconfiguration}. For this problem, we first showed that \prb{\ourprob} can be solved in polynomial time, assuming that the input matroids are given as basis oracles.
Second, we showed that the shortest variant of \prb{\ourprob} is hard to approximate within a factor of $c \log n$ for some constant $c>0$ unless $\P = \NP$.

For future work, it is interesting to investigate the computational complexity of the special settings of \prb{\ourprob}. It would be interesting to design faster or simpler algorithms for \prb{\ourprob} with graphic matroids, that is, for \prb{Spanning Tree Sequence Reconfiguration}.
Our hardness result for the shortest variant uses two distinct partition matroids. Thus, it would be worth considering the case for two identical matroids. Finally, the computational complexity of \prb{Shortest Spanning Tree Sequence Reconfiguration} is another promising direction.

\section*{Acknowledgments}
The first author was supported by JSPS KAKENHI Grant Numbers JP21K17707, JP21H05852, JP22H00513, and JP23H04388.
The second author was supported by JSPS KAKENHI Grant Numbers JP22K17854, JP24K02901, JP24K21315.
The third author was supported by JSPS KAKENHI Grant Numbers JP20H00595, JP23K28034, JP24H00686, and JP24H00697.
The forth author was supported by JST SPRING Grant Number JPMJSP2125.
The fifth author was supported by JST SPRING Grand Number JPMJSP2114.

\providecommand{\noopsort}[1]{}

\end{document}